\renewcommand{\baselinestretch}{\baselinestretch}
\renewcommand{\baselinestretch}{1.1}
\numberwithin{equation}{section}
\newtheorem{thm}{Theorem}[section]
\newtheorem{lem}[thm]{Lemma}
\newtheorem{cor}[thm]{Corollary}
\newtheorem{prop}[thm]{Proposition}
\theoremstyle{definition}
\newtheorem{defn}[thm]{Definition}
\theoremstyle{remark}
\numberwithin{equation}{section}
\newcommand{\gen}{\text{gen}}
\newcommand{\ord}{\text{ord}}
\newcommand{\z}{{\mathbb Z}}
\newcommand{\q}{{\mathbb Q}}
\newcommand{\f}{{\mathbb F}}
\newcommand{\n}{{\mathbb N}}
\newcommand{\e}{{\epsilon}}
\begin{document}
\title[The number of representations of squares]{The number of representations of squares by integral quaternary quadratic forms}

\author{Kyoungmin Kim}

\address{Department of Mathematics, Sungkyunkwan University, Suwon 16419, korea}
\email{kiny30@skku.edu}
\thanks{This work was supported by the National Research Foundation of Korea(NRF) grant funded by the Korea government(MSIT) (NRF-2016R1A5A1008055  and NRF-2018R1C1B6007778)}

\subjclass[2010]{Primary 11E12, 11E20, 11E45} \keywords{Representations of quaternary quadratic forms, Squares, Eta-quotients}


\begin{abstract}
Let $f$ be a positive definite (non-classic) integral quaternary quadratic form. We say $f$ is {\it strongly $s$-regular} if it satisfies a regularity property on the number of representations of squares of integers. In this article, we prove that there are only finitely many strongly $s$-regular quaternary quadratic forms up to isometry if the minimum of the nonzero squares that are represented by the quadratic form is fixed. Furthermore, we show that there are exactly $34$  strongly $s$-regular diagonal quaternary quadratic forms representing one (see Table $1$). In particular, we use eta-quotients to prove the strongly $s$-regularity of the quaternary quadratic form $x^2+2y^2+3z^2+10w^2$, which is, in fact, of class number $2$ (see Lemma \ref{120} and Proposition \ref{333}).
\end{abstract} 

\maketitle
\section{Introduction}
For a positive definite (non-classic) integral quadratic form of rank $k$
$$ 
f(x_1,x_2,\dots,x_k)=\sum_{i,j=1}^{k}a_{ij}x_ix_j, \quad (a_{ij}=a_{ji})
$$
we define the discriminant $df$ of $f$ to be the determinant of the symmetric matrix
$$
M_f=\left(\frac{\partial^2f}{\partial x_i\partial x_j}\right).
$$
For a positive integer $n$, we define $r(n,f)$ the number of representations of $n$ by $f$, that is, 
$$
r(n,f)=\vert\{(x_1,x_2,\dots,x_k)\in \z^k\mid f(x_1,x_2,\dots,x_4)=n\}\vert.
$$
It is well known that $r(n,f)$ is finite if $f$ is positive definite.

Let $f$ be a positive definite (non-classic) integral quadratic form of rank $k$. Let $n_1$ and $n_2$ be positive integers such that $P(n_1)\subset P(2df)$, $(n_2,2df)=1$ and $n=n_1n_2$. Here  $P(n)$ denotes the set of prime factors of $n$. 
The quadratic form $f$ is called {\it strongly $s$-regular} if for any positive integer $n=n_1n_2$,
$$
r(n_1^2n_2^2,f)=r(n_1^2,f)\cdot \prod_{p\nmid 2df}h_p(df,\mu_p),
$$
where $\mu_p=\text{ord}_p(n)$ for any prime $p$ and
$$
h_p(df,\mu_p)\!=\!
\begin{cases}
\displaystyle \sum_{t=0}^{2\mu_p}\left(\frac{(-1)^{\frac{k}2}df}p\right)^tp^{\frac{(k-2)t}{2}}
\ \text{if $k$ is even},\\[0.5cm]

\displaystyle \!\left(\frac{p^{(k-2)(\mu_p +1)}-1}{p^{k-2}-1}-p^{\frac{k-3}{2}} \left(\frac{(-1)^{\frac{k-1}{2}}df}{p}\right)\frac{p^{(k-2)\mu_p}-1}{p^{k-2}-1}\right)\ \!\text{otherwise}.
\end{cases}
$$
We say the genus of a quadratic form $f$ is {\it indistinguishable by squares} if for any integer $n$, $r(n^2,f)=r(n^2,f')$ for any $f'$ in the genus of $f$. From the definition, if the class number of a quadratic form $f$ is one, then the genus of $f$ is indistinguishable by squares. It is unknown whether or not the number of strongly $s$-regular quadratic forms with given rank is finite. Also, it might be interesting to find all such quadratic forms. Related with these questions, there are the following results.

We proved in \cite{ko1} that every ternary quadratic form in the genus of a ternary quadratic form $f$ is strongly $s$-regular if and only if the genus of a ternary quadratic form $f$ is indistinguishable by squares. Furthermore, we completely resolved the conjecture given by Cooper and Lam in \cite{cl}.

It was proved in \cite{ko2} that every strongly $s$-regular ternary quadratic form represents all squares that are represented by its genus, and there are only finitely many strongly $s$-regular ternary quadratic forms up to isometry if 
$$
m_s(f)=\text{min}_{n\in\z^{+}}\{n:r(n^2,f)\ne 0 \}
$$ 
is fixed. Furthermore, it was proved that there are exactly $207$ strongly $s$-regular ternary quadratic forms that represent one.

In this article, we consider the strongly $s$-regular quaternary quadratic forms.
we prove that if the genus of a quaternary quadratic form $f$ is indistinguishable by squares, then every quaternary quadratic form in the genus of $f$ is strongly $s$-regular. We also prove that any strongly $s$-regular quaternary quadratic form represents all squares of integers that are represented by its genus, and there are only finitely many strongly $s$-regular quaternary quadratic forms up to isometry if 
$$
m_s(f)=\text{min}_{n\in\z^{+}}\{n:r(n^2,f)\ne 0 \}
$$ 
is fixed. Furthermore, we show that there are exactly $34$ strongly $s$-regular diagonal quaternary quadratic forms representing one up to isometry (see Table $1$). In particular, we use eta-quotients to prove the strongly $s$-regularity of the quadratic form $x^2+2y^2+3z^2+10w^2$ (see Lemma \ref{120} and Proposition \ref{333}). We also use the mathematics software MAPLE to prove Lemma \ref{357} and Theorem \ref{main2}.

The term lattice will always refer to a positive definite non-classic integral $\z$-lattice on an $n$-dimensional positive definite quadratic space over $\q$. Here, a $\z$-lattice is called non-classic integral if the norm ideal $\mathfrak nL$ is $\z$.  Let $L=\z x_1+\z x_2+ \dots+\z x_n$ be a $\z$-lattice of rank $n$. We write
$$
L\simeq (B(x_i,x_j)).
$$
Here, $B$ is the associated bilinear form. The right hand side matrix is called a {\it matrix presentation} of
$L$. If $B(x_i,x_j)=0$ for any $i\ne j$, then we write $L \simeq \langle Q(x_1),Q(x_2),\dots,Q(x_n)\rangle$, where $Q$ is the quadratic map such that $Q(x)=B(x,x)$ for any $x \in L$. 
We define 
$$
w(L)=\sum_{[M] \in \gen(L)} \frac1{o(M)}\quad \text{and} \quad   r(n,\gen(L))= \frac 1 {w(L)}  \sum_{[M] \in \gen(L)} \displaystyle \frac{r(n,M)}{o(M)},
$$
where $[M]$ is the equivalence class containing $L$ in the genus of $L$ and $o(L)$ is the order of the isometry group $O(L)$.
We say an integer $n$ is represented by the genus of $L$ if an integer $n$ is represented by $L$ over $\z_p$ for any prime $p$ including the infinite prime.
We always assume that $\Delta_p$ is a nonsquare unit in $\z_p^{\times}$ for any odd prime $p$.

Any unexplained notations and terminologies can be found in  \cite {ki} or \cite {om}.

\section{Representations of squares by quaternary quadratic forms}
In this section, we investigate the relation between the strongly $s$-regularity of a quaternary quadratic form and the indistinguishable genus of a quaternary quadratic form by squares. 

\begin{defn}\label{defa}
Let $L$ be a quaternary $\z$-lattice. Let $n_1$ and $n_2$ be positive integers such that $P(n_1)\subset P(2dL)$, $(n_2,2dL)=1$ and $n=n_1n_2$. Here  $P(n)$ denotes the set of prime factors of $n$. 
The quaternary $\z$-lattice $L$ is called {\it strongly $s$-regular} if for any positive integer $n=n_1n_2$,
$$
r(n_1^2n_2^2,L)=r(n_1^2,L)\cdot \prod_{p\nmid 2dL}h_p(dL,\mu_p),
$$
where $\mu_p=\text{ord}_p(n)$ for any prime $p$ and
$$
h_p(dL,\mu_p)=\sum_{t=0}^{2\mu_p}\left(\frac{dL}p\right)^tp^{t}.
$$
\end{defn}

\begin{defn}
Let $L$ be a quaternary $\z$-lattice.
We say the genus of $L$ is {\it indistinguishable by squares} if for any integer $n$, $r(n^2,L)=r(n^2,L')$ for any $\z$-lattice $L'$ in the genus of $L$.
\end{defn}

\begin{lem}
Let $L$ be a quaternary $\z$-lattice and $V=\mathbb Q \otimes L$ be the quadratic space.
Then $L$ represents at least one square of an integer.
\end{lem}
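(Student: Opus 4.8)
The plan is to pass to the rational quadratic space $V$, show that $V$ represents the number $1$, and then clear denominators to land back inside $L$.

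\textbf{Step 1: $V$ represents $1$ over $\q$.} I would form the $5$-dimensional rational quadratic space $W=V\perp\langle-1\rangle$. Since $L$ is positive definite, $V$ is positive definite, so over $\mathbb{R}$ the space $W$ has signature $(4,1)$ and is isotropic; over each $p$-adic completion $\q_p$ every quadratic space of dimension at least $5$ is isotropic (equivalently, $p$-adic quaternary forms are universal), so $W$ is isotropic over $\q_p$ as well. By the Hasse--Minkowski theorem, $W$ is isotropic over $\q$. Choosing a nonzero isotropic vector and writing it as $(v,t)$ with $v\in V$ and $t\in\q$, isotropy means $Q(v)=t^2$; moreover $t\neq 0$, since otherwise $v$ would be a nonzero isotropic vector of the anisotropic space $V$. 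Hence $z:=v/t\in V$ satisfies $Q(z)=1$.

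\textbf{Step 2: clear denominators.} Since $L$ is a $\z$-lattice of full rank in $V$, I would fix a basis $x_1,\dots,x_4$ of $L$ and write $z=\sum_{i}q_ix_i$ with $q_i\in\q$. Letting $m$ be a common denominator of $q_1,\dots,q_4$, the vector $w=mz=\sum_i(mq_i)x_i$ lies in $L$, is nonzero because $z\neq 0$, and satisfies $Q(w)=m^2Q(z)=m^2$ with $m\ge 1$. Therefore $r(m^2,L)\neq 0$, i.e.\ $L$ represents the square $m^2$.

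The argument has no genuine obstacle: the only substantive inputs are the Hasse--Minkowski theorem and the standard local fact that quadratic forms over $\q_p$ of dimension at least $5$ are isotropic; the rest is the routine denominator-clearing step that turns a rational vector of norm $1$ into an integral vector of square norm.
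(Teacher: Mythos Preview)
Your proof is correct and follows essentially the same strategy as the paper. The paper's one-line argument simply invokes the equivalence ``$r(n^2,L)\neq 0$ for some $n$ if and only if $V$ represents $1$'' (which is exactly your denominator-clearing Step~2 and its converse) and implicitly uses that a positive definite quaternary space over $\q$ represents $1$; you have spelled out the latter via Hasse--Minkowski and the fact that $5$-dimensional $p$-adic forms are isotropic, which is precisely the standard justification the paper leaves to the reader.
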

\begin{proof}
The lemma follows directly from the fact that $r(n^2,L)\ne 0$ for some integer $n$  if and only if $1$ is represented by the quadratic space $V$.
\end{proof}

\begin{lem}\label{Min}
Let $L$ be a quaternary $\z$-lattice and let $n$ be a positive integer. For any prime $p$, we let $\ord_p(n)=\mu_p$. If $1$ is represented by the genus of $L$, then we have 
$$
\begin{array}  {rl}
\displaystyle \frac{r(n,\text{gen}(L))}{r(1,\gen(L))}
&=n\displaystyle  \prod_{p\vert 2dL} \frac{\alpha_{p}(n,L)}{\alpha_{p}(1,L)}\displaystyle  \prod_{p\nmid 2dL} \frac{\alpha_{p}(n,L)}{\alpha_{p}(1,L)}\\
&=\displaystyle \prod_{p\vert 2dL} p^{\mu_p} \cdot \frac{\alpha_{p}(n,L)}{\alpha_{p}(1,L)}
\displaystyle \prod_{p\nmid 2dL}\left(\sum_{t=0}^{\mu_p}\left(\frac{dL}{p}\right)^{\mu_p-t}p^t \right).\\
\end{array}
$$
In particular, if the $\z$-lattice $L$ has class number $1$, then we have
$$
r(n,L)=r(1,L)\prod_{p\vert 2dL}p^{\mu_p}\cdot \frac{\alpha_{p}(n,L)}{\alpha_{p}(1,L)}
\prod_{p\nmid 2dL} \left( \sum_{t=0}^{\mu_p}\left(\frac{dL}{p}\right)^{\mu_p-t}p^t\right).
$$
Here, $\alpha_p$ is the local density which is defined in Chapter 5 of \cite{ki}.
\end{lem}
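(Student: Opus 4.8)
The plan is to reduce everything to the Siegel mass formula, which expresses $r(n,\gen(L))$ as a product of local densities times the archimedean factor. First I would invoke Siegel's formula in the form
$$
r(n,\gen(L)) = c_L\cdot n^{\frac{\dim L}{2}-1}\prod_p \alpha_p(n,L),
$$
where $c_L$ is a constant depending only on the genus and $\alpha_p(n,L)$ is the local density of $\mathbb{Z}_p$-representations of $n$ by $L$; for a quaternary lattice $\dim L = 4$, so $n^{\dim L/2 - 1} = n$. Taking the ratio with $r(1,\gen(L))$ kills the genus constant $c_L$ and, since $1^{\,1}=1$, leaves exactly
$$
\frac{r(n,\gen(L))}{r(1,\gen(L))} = n\prod_p \frac{\alpha_p(n,L)}{\alpha_p(1,L)}.
$$
Splitting the product over $p \mid 2dL$ and $p \nmid 2dL$ gives the first displayed equality.

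The second equality is purely a local density computation at good primes $p \nmid 2dL$. For such $p$, the lattice $L_p$ is unimodular, and the local density $\alpha_p(n,L)$ for a unimodular quaternary form depends only on $\mu_p = \ord_p(n)$ and on the Hasse/Legendre invariant $\left(\frac{dL}{p}\right)$. The standard recursion (see Chapter 5 of \cite{ki}) yields, for unimodular $L_p$,
$$
\frac{\alpha_p(n,L)}{\alpha_p(1,L)} = \frac{1}{p^{\mu_p}}\sum_{t=0}^{\mu_p}\left(\frac{dL}{p}\right)^{\mu_p - t} p^t,
$$
so that $n\cdot \frac{\alpha_p(n,L)}{\alpha_p(1,L)} = p^{\mu_p}\cdot \frac{1}{p^{\mu_p}}\sum_{t=0}^{\mu_p}\left(\frac{dL}{p}\right)^{\mu_p-t}p^t$ picks up the stated closed form once the factor $n = \prod_p p^{\mu_p}$ is distributed across the primes. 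I would verify the unimodular local density formula either by the explicit evaluation of $\alpha_p$ for diagonal unimodular forms, or by citing the known generating-function identity for $\sum_{\mu\ge 0}\alpha_p(p^\mu,L)X^\mu$; both routes are routine, and I expect this local computation — getting the exponents and the character powers exactly right — to be the only place requiring real care.

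Finally, the "in particular" clause is immediate: if $L$ has class number $1$ then $\gen(L) = \{[L]\}$, so $r(n,\gen(L)) = r(n,L)$ and $r(1,\gen(L)) = r(1,L)$, and the displayed formula follows by clearing the denominator. The main obstacle is not conceptual but bookkeeping: one must be careful that the archimedean factor and the constant $c_L$ genuinely cancel (they do, since both $n$ and $1$ are positive integers represented by the same genus), and that the local density ratios at bad primes $p \mid 2dL$ are simply left as $\alpha_p(n,L)/\alpha_p(1,L)$ without further simplification, since no structure is assumed there.
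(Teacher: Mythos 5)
Your proposal is correct and follows essentially the same route as the paper: apply the Minkowski--Siegel formula (with archimedean factor $n^{\dim L/2-1}=n$ for a quaternary lattice), cancel the genus constant in the ratio $r(n,\gen(L))/r(1,\gen(L))$, and then evaluate $p^{\mu_p}\,\alpha_p(n,L)/\alpha_p(1,L)=\bigl(p^{\mu_p+1}-\bigl(\tfrac{dL}{p}\bigr)^{\mu_p+1}\bigr)/\bigl(p-\bigl(\tfrac{dL}{p}\bigr)\bigr)$ at the good primes via the explicit unimodular local density formula (the paper cites Theorem 3.1 of Yang for this step, which is the computation you defer to Kitaoka). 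The class number one case is handled identically.
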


\begin{proof}
By the Minkowski--Siegel formula in Chapter 6 of \cite{ki}, we have
$$
r(n,\text{gen}(L))=\pi^{2}\cdot\frac{4}{\sqrt{dL}}\cdot n \cdot \prod_{p<\infty}\alpha_p({n,L}),
$$
where $\alpha_p$ is the local density. If $p$ does not divide $2dL$, then we have
$$
\alpha_p(n,L)=\displaystyle\frac{\left(p-\left(\frac{dL}{p}\right)^{\mu_p+1}\frac1{p^{\mu_p}}\right)\cdot \left(1-\left(\frac{dL}{p}\right)\frac1{p^2}\right)}{p-\left(\frac{dL}{p}\right)},
$$
by Theorem 3.1 of \cite{ya}. Therefore, for any prime $p$ not dividing $2dL$, we have
$$
p^{\mu_p}\cdot\frac{\alpha_{p}(n,L)}{\alpha_{p}(1,L)}=\displaystyle\frac{\left(p^{\mu_p+1}-\left(\frac{dL}{p}\right)^{\mu_p+1}\right)}{p-\left(\frac{dL}{p}\right)}=\sum_{t=0}^{\mu_p}\left(\frac{dL}{p}\right)^{\mu_p-t}p^t.
$$
The lemma follows from this.
\end{proof}

Let $L$ be a quaternary $\z$-lattice and let $p$ be a prime. Suppose that $p$ does not divide $dL$. Then $L_p$ is $\z_p$-maximal lattice. By 82:23 of \cite{om}, we have $L_p=H\perp G$, where  $H$ is an orthogonal sum of hyperbolic and $G$ is either $0$ or anisotropic. Let $\{e_1,\dots, e_{2r}\}$ be a basis of $H$ satisfying $Q(e_i)=0$ for any $i=1,\dots, 2r$,  $B(e_{2j-1},e_{2j})=\frac12$ for any $j=1,\dots, r$ and $B(e_k,e_l)=0$ otherwise. If $G\neq 0$, then let $\{e_{2r+1},e_{2r+2}\}$ be a basis of $G$. Such a basis $\{e_1,\dots,e_{2r},e_{2r+1},e_{2r+2}\}$ of $L_p$ will be called a standard basis.
We define $R_p(L)$ to be the set of all lattice $K$ in the genus of $L$ such that $K_q=L_q$ for any prime $q\neq p$ and there is a standard basis $\{e_1,\dots,e_{2r},e_{2r+1},e_{2r+2}\}$ of $L$ satisfying
$$
\{p^{-1}e_1,pe_2,\dots,p^{-1}e_{2r-1},pe_{2r},e_{2r+1},e_{2r+2}\} 
$$
is a basis of $K_p$. We put $c_p(L)=\vert R_p(L) \vert$. Furthermore, for a primitive vector $x\in L_p$ with $Q(x)\in p^2\z_p$, we define
$$
\rho_p(L)=\vert\{K \in R_p(L) : p^{-1}x\in K_p\} \vert.
$$
Note that $\rho_p(L)$ is independent of the choice of $x$.

\begin{lem}\label{indi}
Let $L$ be a quaternary $\z$-lattice. For every $L'$ in the genus of $L$, if $r(n^2,L)=r(n^2,L')$ for every integer $n$ such that every prime factor of $n$ divides $dL$, then the genus of $L$ is indistinguishable by squares.
\end{lem}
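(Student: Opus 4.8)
The plan is to reduce the general statement ``$r(n^2,L)=r(n^2,L')$ for all $n$'' to the hypothesis, which only concerns those $n$ all of whose prime factors divide $dL$. Write an arbitrary positive integer as $n=n_1n_2$ with $P(n_1)\subset P(2dL)$ and $(n_2,2dL)=1$, exactly as in Definition \ref{defa}. The key structural input is that, for primes $p\nmid 2dL$, the local behaviour of representation numbers at $p$ is completely controlled by the local density, which by Lemma \ref{Min} is the \emph{same} for every lattice in the genus; and the transition from $n^2$ to $(n_1n_2)^2$ at such primes can be effected inside the genus by the operators $R_p$ introduced just before the lemma. So the first step is to establish a ``pushing down'' identity: for a prime $p\nmid 2dL$ and $x$ a primitive vector of $L_p$ with $Q(x)\in p^2\z_p$, relate $r(p^2 m, L)$ to the numbers $r(m,K)$ for $K\in R_p(L)$, using $c_p(L)$ and $\rho_p(L)$. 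Concretely one expects a relation of the shape
$$
r(p^2 m, L) \;=\; r(m,L)\cdot(\text{something}) \;+\; \frac{1}{\rho_p(L)}\sum_{K\in R_p(L)} r\!\left(\tfrac{m}{\,? \,},K\right),
$$
obtained by sorting representations $Q(v)=p^2m$ according to whether $v$ is divisible by $p$ in $L_p$ or not, and in the latter case transferring $v$ into the lattices $K$ where $p^{-1}v$ becomes integral.

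Second, I would iterate this at each prime $p\mid n_2$ to strip off the $n_2$-part entirely: after finitely many applications one reduces $r(n_1^2n_2^2,L)$ to a $\z$-linear combination (with coefficients depending only on $n_2$, $dL$, and the genus-theoretic constants $c_p,\rho_p$, which are genus invariants) of the numbers $r(n_1^2, K)$ as $K$ ranges over lattices in the genus obtained from $L$ by the $R_p$'s. The crucial point is that all of these auxiliary lattices $K$ again lie in $\gen(L)$, and each such $K$ has the same $dL$, so that $n_1$ still has all its prime factors dividing $2dK=2dL$; hence the hypothesis applies to \emph{every} lattice appearing, giving $r(n_1^2,K)=r(n_1^2,K')$ whenever $K,K'$ are in the genus. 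Since the combinatorial coefficients in the reduction depend only on the genus and not on the chosen representative, we would conclude $r(n^2,L)=r(n^2,L')$ for all $n$, which is precisely indistinguishability by squares.

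Third, to make step two rigorous I would either induct on $\sum_{p\mid n_2}\ord_p(n_2)$, or — cleaner — invoke the genus average: by Lemma \ref{Min}, $r(n^2,\gen(L))/r(1,\gen(L))$ factors into local pieces, and for $p\nmid 2dL$ the factor is $\sum_{t=0}^{2\mu_p}(dL/p)^{\mu_p-t}p^t$, which is a \emph{genus} quantity. So $r(n^2,\gen(L))$ is a fixed multiple (depending only on the genus and on the $n_1$-part) of $r(n_1^2,\gen(L))$. One then shows that the same $R_p$-reduction applies verbatim to each individual lattice, so that the ``defect'' $r(n^2,L)-r(n^2,\gen(L))$ is, after stripping the $n_2$-part, a genus-universal combination of defects $r(n_1^2,K)-r(n_1^2,\gen(L))$; the hypothesis forces all these to vanish simultaneously across the genus, hence the defect for $n$ vanishes too.

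The main obstacle is the first step: getting the exact recursion for $r(p^2m,L)$ in terms of the $R_p(L)$-lattices, with the correct multiplicities $c_p(L)$ and $\rho_p(L)$, and checking that it is uniform over the genus. This is a local computation at the single good prime $p$: one must verify that every representation $v$ of $p^2m$ by $L$ with $v$ primitive mod $p$ is ``$p$-divisible'' in exactly the predicted number of neighbours $K\in R_p(L)$, independently of $v$ (this is where the stated independence of $\rho_p(L)$ from the choice of $x$ is used), and that no representation is counted twice. Once that bookkeeping is in place, the rest is a finite induction combined with the genus-invariance supplied by Lemma \ref{Min}.
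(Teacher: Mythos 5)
Your plan is essentially the paper's proof: the ``pushing down'' identity you anticipate is exactly the Hecke-operator relation \eqref{hecke1} (Hilfssatz 1 of \cite{sp}), under which the genus-equality hypothesis collapses the neighbour sum $\frac{1}{\rho_p(L)}\sum_{K\in R_p(L)}r(n^2,K)$ to $\frac{c_p(L)}{\rho_p(L)}r(n^2,L)$ and yields $r(p^2n^2,K)=(p^2+1)r(n^2,K)-p^2\,r\bigl(\tfrac{n^2}{p^2},K\bigr)$ uniformly over the genus, after which one inducts on the number of prime factors of $n$ prime to $dL$ (counted with multiplicity), exactly as you propose. The only correction to your sketch is that in the true identity the neighbour sum is evaluated at $n$ itself rather than at $n$ divided by something, with the term $p^2\,r(n/p^2,L)$ appearing as a separate summand on the left; and the ``main obstacle'' you flag need not be rederived, since the paper simply quotes this identity from \cite{sp}.
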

\begin{proof}
Let $L$ be a quaternary $\z$-lattice. Then by Hilfssatz $1$ of \cite{sp}, the action of Hecke operators $T(p^2)$ for any prime $p\nmid dL$ on theta series of the lattice $L$ gives
\begin{equation}\label{hecke1}
\begin{array}{ll}
&\hspace{-1cm}\displaystyle r(p^2n,L)+p\cdot\left(\frac{dL}{p}\right)\cdot r(n,L)+p^2\cdot r\left(\frac{n}{p^2},L\right)\\[0.35cm]
&\hspace{2cm} \displaystyle =\frac{1}{\rho_p(L)}\sum_{K\in R_p(L)}r(n,K)+\left(\kappa_p(L)-\frac{c_p(L)}{\rho_p(L)}\right)r(n,L).
\end{array}
\end{equation}
Here, $\kappa_p(L)=p^2+p\cdot\left(\frac{dL}{p}\right)+1$.
Suppose that $r(n^2,L)=r(n^2,L')$ for every $L'\in \gen(L)$. Then, by \eqref{hecke1}, for any prime $p\nmid dL$, we have
$$
r(p^2n^2,K)=(p^2+1)r(n^2,K)-p^2\cdot r\left(\frac{n^2}{p^2},K\right),
$$
for every $K\in \gen(L)$. Hence, we have $r(p^2n^2,L)=r(p^2n^2,L')$ for every $L'\in \gen(L)$ and for any prime $p\nmid dL$. The lemma follows from induction on the number of prime factors not dividing $dL$ counting multiplicity.
\end{proof}

\begin{prop}\label{indis}
Let $L$ be a quaternary $\z$-lattice. If the genus of $L$ is indistinguishable by squares, then every $\z$-lattice in the genus of $L$ is strongly $s$-regular.
\end{prop}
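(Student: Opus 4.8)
The plan is to use the hypothesis to replace every representation number of a lattice in the genus by the corresponding genus-averaged quantity, and then to read the required multiplicative identity off the Minkowski--Siegel formula.

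First I would observe that if the genus of $L$ is indistinguishable by squares then, for any integer $m$, the number $r(m^{2},M)$ is the same for all $M\in\gen(L)$; since $r(m^{2},\gen(L))$ is a convex combination of these numbers, $r(m^{2},M)=r(m^{2},\gen(L))$ for every $M\in\gen(L)$. As $dM=dL$ throughout the genus, it therefore suffices to prove, for every positive integer $n$ written as $n=n_{1}n_{2}$ with $P(n_{1})\subset P(2dL)$ and $(n_{2},2dL)=1$, the single identity
$$
r(n_{1}^{2}n_{2}^{2},\gen(L))=r(n_{1}^{2},\gen(L))\cdot\prod_{p\nmid 2dL}h_{p}(dL,\mu_{p}),\qquad \mu_{p}=\ord_{p}(n);
$$
applying the previous sentence to the arguments $n_{1}^{2}n_{2}^{2}$ and $n_{1}^{2}$ then yields strong $s$-regularity of each $M\in\gen(L)$.

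To prove the identity I would apply the Minkowski--Siegel formula of Lemma \ref{Min} to the two arguments $n_{1}^{2}n_{2}^{2}$ and $n_{1}^{2}$ and compare the local densities prime by prime. Assume first that $r(n_{1}^{2},\gen(L))\ne 0$, so that every $\alpha_{p}(n_{1}^{2},L)$ is positive and one may form the quotient
$$
\frac{r(n_{1}^{2}n_{2}^{2},\gen(L))}{r(n_{1}^{2},\gen(L))}=n_{2}^{2}\prod_{p<\infty}\frac{\alpha_{p}(n_{1}^{2}n_{2}^{2},L)}{\alpha_{p}(n_{1}^{2},L)}.
$$
For $p\mid 2dL$ we have $p\nmid n_{2}$, so $n_{1}^{2}n_{2}^{2}=u^{2}n_{1}^{2}$ with $u=n_{2}\in\z_{p}^{\times}$, and since a local density is unchanged when its first argument is multiplied by the square of a $p$-adic unit, $\alpha_{p}(n_{1}^{2}n_{2}^{2},L)=\alpha_{p}(n_{1}^{2},L)$; such primes contribute $1$. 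For $p\nmid 2dL$ we have $p\nmid n_{1}$, so $n_{1}^{2}$ is a unit square in $\z_{p}$ and $\alpha_{p}(n_{1}^{2},L)=\alpha_{p}(1,L)$; moreover $\ord_{p}(n_{1}^{2}n_{2}^{2})=2\ord_{p}(n_{2})=2\mu_{p}$, and the local computation inside Lemma \ref{Min} gives
$$
p^{2\mu_{p}}\cdot\frac{\alpha_{p}(n_{1}^{2}n_{2}^{2},L)}{\alpha_{p}(1,L)}=\sum_{t=0}^{2\mu_{p}}\left(\frac{dL}{p}\right)^{2\mu_{p}-t}p^{t}=\sum_{t=0}^{2\mu_{p}}\left(\frac{dL}{p}\right)^{t}p^{t}=h_{p}(dL,\mu_{p}),
$$
the middle equality being $\left(\frac{dL}{p}\right)^{2\mu_{p}}=1$ together with $\left(\frac{dL}{p}\right)^{-1}=\left(\frac{dL}{p}\right)$. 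Combining these per-prime contributions with $n_{2}^{2}=\prod_{p\nmid 2dL}p^{2\mu_{p}}$, the quotient equals $\prod_{p\nmid 2dL}h_{p}(dL,\mu_{p})$, which is the required identity.

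It remains to dispose of the case $r(n_{1}^{2},\gen(L))=0$. Then $\alpha_{p}(n_{1}^{2},L)=0$ for some $p$; this $p$ is not $\infty$ (the archimedean factor $\pi^{2}\cdot 4/\sqrt{dL}$ is positive) and, by the positivity $\alpha_{p}(n_{1}^{2},L)=\alpha_{p}(1,L)=1-\left(\frac{dL}{p}\right)p^{-2}>0$ which holds for $p\nmid 2dL$ (there $p\nmid n_{1}$), it satisfies $p\mid 2dL$; by the invariance used above, $\alpha_{p}(n_{1}^{2}n_{2}^{2},L)=0$ as well, so both sides of the identity vanish and it holds trivially. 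This completes the plan. The only step carrying content beyond bookkeeping is the square-class invariance $\alpha_{p}(n_{1}^{2}n_{2}^{2},L)=\alpha_{p}(n_{1}^{2},L)$ for $p\mid 2dL$, which holds because $x\mapsto n_{2}x$ is a measure-preserving bijection from the set of solutions of $Q(x)\equiv n_{1}^{2}\pmod{p^{k}}$ onto the set of solutions of $Q(x)\equiv n_{1}^{2}n_{2}^{2}\pmod{p^{k}}$ for every $k$. Note that the argument never requires $1$ to be represented by the genus of $L$, so it applies uniformly, including when $m_{s}(L)>1$.
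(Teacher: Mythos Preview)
Your argument is correct and follows essentially the same route as the paper: both proofs pass from the lattice to the genus via the hypothesis, and then read the multiplicative identity off the Minkowski--Siegel formula (Lemma~\ref{Min}), using that $\alpha_p(n_1^2n_2^2,L)=\alpha_p(n_1^2,L)$ at primes $p\mid 2dL$.

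The one place you diverge from the paper is in the degenerate case $r(n_1^2,\gen(L))=0$. The paper handles this by invoking Lemma~\ref{indi} (the Hecke-operator recurrence $r(p^2n^2,K)=(p^2+1)r(n^2,K)-p^2r(n^2/p^2,K)$ for $p\nmid dL$) to propagate the vanishing from $n_1^2$ to $n_1^2n_2^2$. You instead stay inside the Siegel product: some local density $\alpha_p(n_1^2,L)$ must vanish, necessarily at a prime $p\mid 2dL$, and then unit-square invariance forces $\alpha_p(n_1^2n_2^2,L)=0$ as well. Your treatment is slightly more self-contained, since it avoids the Hecke-operator machinery of Lemma~\ref{indi}; the paper's treatment, on the other hand, ties the zero case to a structural result that is independently useful elsewhere in the paper.
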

\begin{proof}
Suppose that $r(n^2,L)=r(n^2,L')$ for every integer $n$ and every $L'$ in the genus of $L$. 
Let $n_1$ and $n_2$ be positive integers such that $P(n_1)\subset P(2dL)$, $(n_2,2dL)=1$ and $n=n_1n_2$. First, assume that $r(n_1^2,L)\ne 0$. By the Minkowski--Siegel formula (see, for example, Lemma \ref{Min}), we have
$$
\begin{array}{ll}
\displaystyle\frac{r(n_1^2n_2^2,L')}{r(n_1^2,L')}=\frac{r(n_1^2n_2^2,\gen(L))}{r(n_1^2,\gen(L))}&=\displaystyle \prod_{p\vert 2dL} \frac{\alpha_{p}(n_1^2n_2^2,L)}{\alpha_{p}(n_1^2,L)}\prod_{p\nmid 2dL}h_p(dL,\mu_p)\\
&=\displaystyle\prod_{p\nmid 2dL}h_p(dL,\mu_p),
\end{array}
$$
for every $\z$-lattice $L'\in \gen(L)$. Next, assume that $r(n_1^2,L)=0$. Then by Lemma \ref{indi}, we see that $r(n_1^2n_2^2,L')=0$ for every $\z$-lattice $L'\in \gen(L)$. Therefore, we have
$$
r(n_1^2n_2^2,L')=r(n_1^2,L')\displaystyle\prod_{p\nmid 2dL'}h_p(dL',\mu_p),
$$
which implies that every $\z$-lattice $L'$ in the genus of $L$ is strongly $s$-regular.  
\end{proof}

Now, we present some known results on the number of representations of integers by quadratic forms, which are needed later.
Let $L$ be a (non-classic integral) quaternary $\z$-lattice. For any prime $p$, the $\lambda_p$-transformation (or Watson transformation) is defined as follows: 
$$
\Lambda_p(L)= \{ x \in L : Q(x + z) \equiv Q(z) \  (\text{mod} \ p) \mbox{ for
all $z \in L$}\}.
$$
Let $\lambda_p(L)$ be the non-classic integral
lattice obtained from $\Lambda_p(L)$ by scaling $V=L\otimes \mathbb Q$ by a suitable
rational number. For a positive integer $N=p_1^{e_1}p_2^{e_2}\cdots p_k^{e_k}$,   we  also define 
$$
\lambda_N(L)=\lambda_{p_1}^{e_1}(\lambda_{p_2}^{e_2}(\cdots\lambda_{p_{k-1}}^{e_{k-1}}(\lambda_{p_k}^{e_k}(L))\cdots)).
$$
Note that $\lambda_p(\lambda_q(L))=\lambda_q(\lambda_p(L))$ for any primes $p \ne q$.

\begin{lem}\label{aniso}
Let $L$ be a quaternary $\z$-lattice and let $p$ be an odd prime. If the unimodular component in a Jordan decomposition of $L_p$ is anisotropic, then
$$
r(pn,L)=r(pn,\Lambda_p(L)).
$$
\begin{proof}
See \cite{co}.
\end{proof}
\end{lem}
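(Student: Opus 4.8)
The plan is to prove that every vector $v\in L$ with $Q(v)=pn$ already lies in the sublattice $\Lambda_p(L)$; combined with the trivial inclusion $\Lambda_p(L)\subseteq L$, this gives the equality $r(pn,L)=r(pn,\Lambda_p(L))$. Everything takes place locally at $p$. First I would record the shape of $\Lambda_p(L)$: one has $\Lambda_p(L)_q=L_q$ for every prime $q\neq p$, and since $p$ is odd (so $2\in\z_p^{\times}$) the defining congruence $Q(x+z)\equiv Q(z)\pmod p$ for all $z\in L_p$ is equivalent — substitute $z$ and $2z$ and subtract — to $B(x,L_p)\subseteq p\z_p$, which in turn forces $Q(x)=B(x,x)\in p\z_p$. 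Taking a Jordan splitting $L_p=L_0\perp L'$ with $L_0$ unimodular and $L'$ of scale contained in $p\z_p$, orthogonality gives $B(x_0+x',L_p)=B(x_0,L_0)+B(x',L')$, and since $B(x',L')\subseteq p\z_p$ automatically while $L_0$ is unimodular, this lies in $p\z_p$ iff $x_0\in pL_0$. Hence $\Lambda_p(L)_p=pL_0\perp L'$.

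The crucial local input, and the only place the hypothesis is used, is the following: if $v_0\in L_0$ and $Q(v_0)\in p\z_p$, then $v_0\in pL_0$. For odd $p$, a unimodular $\z_p$-lattice of rank $\geq 3$ is isotropic over $\q_p$ (its Hasse symbol is trivial and it is not the anisotropic quaternary form), so the anisotropy of $L_0$ forces $\rank(L_0)\leq 2$. Moreover, for a unimodular lattice with $p$ odd, anisotropy over $\q_p$ is equivalent to anisotropy of the reduction $\overline{L_0}=L_0/pL_0$ as a quadratic space over $\f_p$: in rank $\leq 2$ the square class of $-\det$ is visible mod $p$, and any nontrivial zero mod $p$ lifts by Hensel's lemma. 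Therefore $\overline{Q}(\overline{v_0})=0$ in $\f_p$ forces $\overline{v_0}=0$, i.e. $v_0\in pL_0$.

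To finish, take $v\in L$ with $Q(v)=pn$ and decompose $v=v_0+v'$ in $L_p=L_0\perp L'$. Since the scale of $L'$ is contained in $p\z_p$ we have $Q(v')\in p\z_p$, hence $Q(v_0)=pn-Q(v')\in p\z_p$, so $v_0\in pL_0$ by the previous paragraph and $v\in pL_0\perp L'=\Lambda_p(L)_p$. As $v\in L_q=\Lambda_p(L)_q$ for all $q\neq p$, and a $\z$-lattice is the intersection of its localizations inside $V$, we conclude $v\in\Lambda_p(L)$; thus $\{v\in L:Q(v)=pn\}=\{v\in\Lambda_p(L):Q(v)=pn\}$, which is the assertion. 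I expect the main obstacle to be the local statement of the second paragraph — making precise that for odd $p$ the anisotropy of the unimodular Jordan component over $\q_p$ is genuinely detected by its reduction mod $p$ (equivalently, that the hypothesis pins that component down to rank at most two and a mod-$p$-anisotropic form); once that is secured, the remainder is routine bookkeeping with Jordan splittings and the local description of $\Lambda_p$.
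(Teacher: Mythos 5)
Your proof is correct. The paper gives no argument of its own here---it simply cites Chan--Oh \cite{co}---and your reasoning (the hypothesis forces the unimodular Jordan component at $p$ to have rank at most two with anisotropic reduction mod $p$, so every $v\in L$ with $Q(v)=pn$ already lies in $pL_0\perp L'=\Lambda_p(L)_p$, hence in $\Lambda_p(L)$) is precisely the standard argument underlying that reference.
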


Let $L$ be a ternary $\z$-lattice. Assume that the $\frac12\z_p$-modular component in a Jordan decomposition of $L_p$ is nonzero isotropic. Assume that $p$ is a prime dividing $\frac12dL$.
Then by Weak Approximation Theorem, there exists a basis $\{x_1, x_2, x_3 \}$ for $L$ such that
$$
(B(x_i,x_j))\equiv\begin{pmatrix}0&\frac12\\ \frac12&0\end{pmatrix}\perp \langle p^{\ord_p(\frac12dL)} \delta\rangle \ (\text{mod} \ p^{\ord_p(\frac12dL)+1}),
$$
where $\delta$ is an integer not divisible by $p$. We define
$$
\Gamma_{p,1}(L) = \z px_1 + \z x_2+ \z x_3 \quad \text{and} \quad \Gamma_{p,2}(L) = \z x_1 + \z px_2+ \z x_3.
$$  
Note that $\Gamma_{p,1}(L)$ and $\Gamma_{p,2}(L)$ are unique sublattices of $L$ with index $p$ whose norm is contained in $p\z$. For some properties of these sublattices of $L$, see \cite {jlo}. 

\begin{lem} \label{iso}
Under the same assumptions given above,  we have 
$$
r(pn,L)=r(pn,\Gamma_{p,1}(L))+r(pn,\Gamma_{p,2}(L))-r(pn,\Lambda_p(L)).
$$ 
\end{lem}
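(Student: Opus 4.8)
The plan is to expand the quadratic map $Q$ in the coordinates provided by the basis $\{x_1,x_2,x_3\}$ fixed above, to split the set of representations of $pn$ by $L$ according to which coordinate is divisible by $p$, to apply inclusion--exclusion, and finally to identify the intersection $\Gamma_{p,1}(L)\cap\Gamma_{p,2}(L)$ with the sublattice $\Lambda_p(L)$.

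First I would observe that since $p\mid\frac12 dL$ we have $m:=\ord_p(\frac12 dL)\ge 1$, so the congruence defining the basis already pins down the Gram matrix of $\{x_1,x_2,x_3\}$ modulo $p$: one has $B(x_1,x_1)\equiv B(x_2,x_2)\equiv 0$, $B(x_3,x_3)\equiv p^m\delta\equiv 0$, $2B(x_1,x_3)\equiv 2B(x_2,x_3)\equiv 0$ and $2B(x_1,x_2)\equiv 1$, all modulo $p$. Expanding, for $v=a_1x_1+a_2x_2+a_3x_3\in L$ we get
$$
Q(v)=\sum_{i=1}^{3}a_i^2 B(x_i,x_i)+\sum_{1\le i<j\le 3}a_ia_j\cdot 2B(x_i,x_j)\equiv a_1a_2\pmod p .
$$
Hence $Q(v)=pn$ forces $p\mid a_1a_2$, so $p\mid a_1$ (whence $v\in\Gamma_{p,1}(L)$) or $p\mid a_2$ (whence $v\in\Gamma_{p,2}(L)$); conversely every vector of $\Gamma_{p,1}(L)\cup\Gamma_{p,2}(L)$ already lies in $L$. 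Therefore
$$
\{v\in L : Q(v)=pn\}=\{v\in\Gamma_{p,1}(L):Q(v)=pn\}\cup\{v\in\Gamma_{p,2}(L):Q(v)=pn\},
$$
and inclusion--exclusion yields $r(pn,L)=r(pn,\Gamma_{p,1}(L))+r(pn,\Gamma_{p,2}(L))-r\big(pn,\Gamma_{p,1}(L)\cap\Gamma_{p,2}(L)\big)$.

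It then remains to check that $\Gamma_{p,1}(L)\cap\Gamma_{p,2}(L)=\Lambda_p(L)$. On one hand, by definition $\Gamma_{p,1}(L)\cap\Gamma_{p,2}(L)=\{a_1x_1+a_2x_2+a_3x_3\in L : p\mid a_1,\ p\mid a_2\}=\z px_1+\z px_2+\z x_3$. On the other hand, $v\in\Lambda_p(L)$ iff $Q(v)+2B(v,z)\equiv 0\pmod p$ for every $z\in L$; since $z\mapsto Q(v)+2B(v,z)$ is affine-linear modulo $p$, this is equivalent to $Q(v)\equiv 0$ together with $2B(v,x_k)\equiv 0\pmod p$ for $k=1,2,3$. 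Using the mod-$p$ Gram data we have $2B(v,x_1)\equiv a_2$, $2B(v,x_2)\equiv a_1$ and $2B(v,x_3)\equiv 0$, so these conditions reduce to $p\mid a_1$ and $p\mid a_2$ (and then $Q(v)\equiv a_1a_2\equiv 0$ is automatic). Thus $\Lambda_p(L)=\z px_1+\z px_2+\z x_3=\Gamma_{p,1}(L)\cap\Gamma_{p,2}(L)$, so $r(pn,\Gamma_{p,1}(L)\cap\Gamma_{p,2}(L))=r(pn,\Lambda_p(L))$, and the lemma follows.

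Once the coordinate expansion of $Q$ modulo $p$ is in hand the argument is essentially a one-line inclusion--exclusion, so there is no serious obstacle; the only place demanding a little care is the bookkeeping of the congruences, in particular checking that everything goes through uniformly when $p=2$, where $B$ takes values in $\frac12\z$. This causes no real difficulty since every computation above is carried out modulo $p$ while the basis is prescribed modulo $p^{m+1}$ with $m\ge 1$, so the rank-one component $\langle p^m\delta\rangle$ never contributes modulo $p$ and all the relevant Gram entries are determined. I would also record explicitly, as already noted after the definition of the $\Gamma_{p,i}(L)$, that these are precisely the two index-$p$ sublattices of $L$ with norm contained in $p\z$, which is what makes the splitting of the representation set canonical rather than ad hoc.
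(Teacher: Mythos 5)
Your proposal is correct. Note, however, that the paper itself gives no argument here: its ``proof'' is a one-line citation to Proposition 4.1 of \cite{jlo}, so what you have written is a self-contained replacement rather than a variant of an argument in the text. Your computation is sound: with the prescribed basis the Gram matrix reduces mod $p$ to $2B(x_1,x_2)\equiv 1$ and all other entries $\equiv 0$ (the $\langle p^{m}\delta\rangle$ block dies because $m=\ord_p(\tfrac12 dL)\ge 1$), so $Q(a_1x_1+a_2x_2+a_3x_3)\equiv a_1a_2 \pmod p$, and $Q(v)=pn$ forces $p\mid a_1$ or $p\mid a_2$; inclusion--exclusion then reduces everything to identifying $\Gamma_{p,1}(L)\cap\Gamma_{p,2}(L)=\z px_1+\z px_2+\z x_3$ with $\Lambda_p(L)$, which your linear-algebra check ($2B(v,x_1)\equiv a_2$, $2B(v,x_2)\equiv a_1$, $2B(v,x_3)\equiv 0$) does correctly, including the case $p=2$ where $Q(x+z)-Q(z)=Q(x)+2B(x,z)$ still lies in $\z$ for a non-classic integral lattice. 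This is essentially the proof one finds in \cite{jlo}; the only cosmetic caveat is that the lemma as used later in the paper (in the proof of Theorem \ref{main}) is applied to the ternary section $L'$ of a quaternary lattice, and your argument, being purely local to the fixed basis, covers exactly the stated ternary situation, which is all that is needed.
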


\begin{proof}
See Proposition 4.1 of \cite{jlo}.
\end{proof}

\section{Strongly $s$-regular quaternary lattices}

\begin{lem}\label{squareregular}
Any strongly $s$-regular quaternary $\z$-lattice $L$ represents all squares of integers that are represented by its genus.
\end{lem}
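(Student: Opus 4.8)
\emph{Setup and reduction to the $2dL$-part.} The goal is to show that $r(n^2,\gen(L))\ne 0$ implies $r(n^2,L)\ne 0$. Write $n=n_1n_2$ with $P(n_1)\subset P(2dL)$ and $(n_2,2dL)=1$. First I would record that every factor $h_p(dL,\mu)$ is a positive integer: writing $\varepsilon=\left(\frac{dL}{p}\right)\in\{1,-1\}$, the sum $\sum_{t=0}^{2\mu}\varepsilon^tp^t$ equals either $\sum_{t=0}^{2\mu}p^t$ or $\frac{p^{2\mu+1}+1}{p+1}$, both $\ge 1$. Hence, by the definition of strong $s$-regularity, $r(n^2,L)\ne 0$ if and only if $r(n_1^2,L)\ne 0$. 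On the other hand I would characterize the squares represented by $\gen(L)$: for any prime $p$ the condition ``$L_p$ represents $m^2$'' depends only on $\ord_p(m)$ and is nondecreasing in it (if $Q(x)=m^2$ then $Q(px)=(pm)^2$, and a $p$-adic unit square may be absorbed into the variable), so $\gen(L)$ represents $m^2$ if and only if $D\mid m$, where $D=\prod_{p\mid 2dL}p^{\kappa_p}$ and $\kappa_p=\min\{k\ge 0:L_p\text{ represents }p^{2k}\}$; here $\kappa_p=0$ for $p\nmid 2dL$ because a unimodular quaternary $\z_p$-lattice with $p$ odd is isotropic, hence universal. Note $D=m_s(\gen(L))$.

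\emph{Reduction to one square.} Since $P(D)\subset P(2dL)$, the hypothesis $r(n^2,\gen(L))\ne0$ is equivalent to $D\mid n_1$; write $n_1=Dm'$. By the previous paragraph it suffices to prove $r((Dm')^2,L)\ne0$, and because $r(a^2,L)\ne0$ forces $r((ab)^2,L)\ne0$ for every $b$ (scale a representation of $a^2$ by $b$), the whole lemma reduces to the single assertion
$$
r(D^2,L)\ne 0,\qquad\text{equivalently}\qquad m_s(L)=m_s(\gen(L)).
$$
(The equivalence: $L$ represents at least one square, so $m_s(L)$ is well defined; $\gen(L)$ represents $m_s(L)^2$, so $D\mid m_s(L)$ and in particular $D\le m_s(L)$; and $r(D^2,L)\ne0$ would force $m_s(L)\le D$ by minimality.)

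\emph{The descent.} Suppose for contradiction that $m_s(L)>D$ and pick a prime $p\mid m_s(L)/D$; then $p\mid 2dL$, $P(m_s(L))\subset P(2dL)$ (by the argument of the first paragraph applied to $L$), and $\ord_p(m_s(L))\ge\kappa_p+1$. I would aim to deduce $r((m_s(L)/p)^2,L)\ne0$, contradicting the minimality of $m_s(L)$. The mechanism is the reduction of the power of $p$ via Watson transformations, split according to the Jordan decomposition of $L_p$: when $p$ is odd and the unimodular component of $L_p$ is anisotropic, Lemma \ref{aniso} gives $r(m_s(L)^2,L)=r(m_s(L)^2,\Lambda_p(L))$; when that component is isotropic one uses an identity of the type of Lemma \ref{iso}; the prime $p=2$ (and the remaining unimodular types) are handled via the Hecke relation \eqref{hecke1} when $p\nmid dL$ and via the corresponding $\lambda_2$-identity when $2\mid dL$. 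In each case $\lambda_p(L)$ has strictly smaller discriminant, so one sets up an induction on $dL$: one checks that $\lambda_p(L)$ is again strongly $s$-regular, applies the lemma to it, and pulls the nonvanishing back to $L$, using strong $s$-regularity of $L$ (through the positivity of the factors $h_q$) to absorb the primes whose role changes under $\lambda_p$. This produces $r((m_s(L)/p)^2,L)\ne0$ and the desired contradiction, so $m_s(L)=m_s(\gen(L))$ and the proof is complete.

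\emph{Main obstacle.} The substantive step is the descent in the last paragraph: the local case analysis over the Jordan type of $L_p$ at the primes $p\mid 2dL$ — in particular at $p=2$ and across the anisotropic/isotropic dichotomy of the unimodular component — together with the verification that strong $s$-regularity is compatible with the transformations $\lambda_p$ so that the induction on the discriminant closes. This is exactly the point at which the quaternary situation requires genuinely four-dimensional input rather than a transcription of the ternary argument of \cite{ko2}.
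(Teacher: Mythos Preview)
Your reduction to the single assertion $m_s(L)=m_s(\gen(L))$ is correct and cleanly argued. The descent you sketch in the last paragraph, however, does not close, and the obstacle you flag is real rather than a matter of bookkeeping. To induct on $dL$ you need $\lambda_p(L)$ (or $\lambda_p^2(L)$) to be strongly $s$-regular, but the only results of this kind in the paper are Propositions~\ref{odd} and~\ref{even}, which apply only when $L_p$ does \emph{not} represent $1$ and moreover explicitly exclude the terminal local types listed in~\eqref{except}. Your descent must in particular handle the case $\kappa_p=0$ with $p\mid m_s(L)$, i.e.\ $L_p$ represents $1$ while $L$ fails to represent $D^2$; there Propositions~\ref{odd} and~\ref{even} say nothing, the unimodular component of $L_p$ need not be anisotropic so Lemma~\ref{aniso} is unavailable, and there is no evident mechanism by which $\lambda_p(L)$ would inherit strong $s$-regularity. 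Without that inheritance the induction has no fuel, and the claim that $d(\lambda_p(L))<dL$ is also not guaranteed in general.

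The paper avoids all of this with a short analytic argument. Suppose $a^2\ra\gen(L)$ but $r(a^2,L)=0$. Strong $s$-regularity, together with the positivity of the factors $h_p$ that you recorded, forces $r((na)^2,L)=0$ for \emph{every} integer $n$ with $(n,2dL)=1$. On the other hand $(na)^2\ra\gen(L)$ for all such $n$, and Hanke's effective representation bound for quaternary forms (Theorem~6.3 of~\cite{ha}) guarantees $r(m^2a^2,L)\ne0$ for some sufficiently large $m$ coprime to $2dL$, a contradiction. No Watson transformations, no induction on the discriminant, and no local case analysis at primes dividing $2dL$ are needed; the genuinely quaternary input is supplied entirely by Hanke's theorem.
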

\begin{proof}
Let $L$ be a strongly $s$-regular quaternary $\z$-lattice. Suppose, on the contrary, that there is an integer $a$ such that $a^2$ is represented by the genus of $L$, whereas it is not represented by $L$ itself.
Then for any prime $p\nmid 2dL$, if $a=p^t\cdot b$ for some integer $b$ such that $(b,p)=1$, then, for an integer $s\ge1$, we have
$$
r(p^{2s}a^2,L)=r(b^2,L)\left(\sum_{i=0}^{2t+2s}\left(\frac{dL}{p}\right)^ip^i\right)=0.
$$
By applying this to any prime $q$ such that $(q,2dL)=1$, we have
$$
r(n^2a^2,L)=0,
$$
for any integer $n$ such that $(n,2dL)=1$.
However, by Theorem 6.3 of \cite{ha}, there is a sufficiently large integer $m$ such that $(m,2dL)=1$ and
$$
r(m^2a^2,L)\ne 0,
$$ 
which is a contradiction.
\end{proof}

\begin{cor}
Let $L$ be a strongly $s$-regular quaternary $\z$-lattice. Then every integer $m$ such that $m^2$ is represented by $L$ is a multiple of 
$$
m_s(L)=\text{min}_{n\in\z^{+}}\{n:r(n^2,L)\ne 0 \}.
$$
\end{cor}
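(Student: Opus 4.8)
The plan is to pin down exactly the set $\{m\in\z^{+}:r(m^2,L)\neq0\}$. Put $m_0=m_s(L)$, and for each positive integer $m$ write $m=m_1m_2$ with $P(m_1)\subset P(2dL)$ and $(m_2,2dL)=1$. The starting point is the elementary remark that $h_p(dL,\mu_p)=\sum_{t=0}^{2\mu_p}\left(\frac{dL}{p}\right)^tp^t$ is a \emph{positive} integer for every $p\nmid 2dL$ — it equals $1+p+\dots+p^{2\mu_p}$ if $\left(\frac{dL}{p}\right)=1$ and $\frac{1+p^{2\mu_p+1}}{1+p}$ if $\left(\frac{dL}{p}\right)=-1$ — so the defining identity of strong $s$-regularity gives
$$
r(m^2,L)\neq0\iff r(m_1^2,L)\neq0.
$$
Applying this with $m=m_0$ and invoking minimality of $m_0$ forces $m_2=1$, i.e. every prime divisor of $m_0$ divides $2dL$. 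Hence it suffices to show that $m_0\mid m_1$ whenever $r(m_1^2,L)\neq0$ and $P(m_1)\subset P(2dL)$; call such $m_1$ a $2dL$-primary integer.

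Next I would localize. By Lemma~\ref{squareregular} together with its trivial converse, $r(m^2,L)\neq0$ if and only if $m^2$ is represented by $L_p$ for every prime $p$, including $p=\infty$. Among these places, $\infty$ and the primes $p\nmid 2dL$ impose nothing when $m=m_1$ is $2dL$-primary: for such $p$ one has $m_1^2\in(\z_p^{\times})^2$, while $m_0^2$ is represented by $L_p$ and is likewise a square unit at $p$, so rescaling a representing vector by a $p$-adic unit — legitimate because $L_p$ is a $\z_p$-module — shows that $1$, hence every square unit, hence $m_1^2$, is represented by $L_p$. Consequently only the primes $p\mid 2dL$ can obstruct representability.

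The core is then the local behavior at a prime $p\mid 2dL$. I would study $S_p=\{k\ge0:L_p\text{ represents }p^{2k}\}$. It is upward closed, since $Q(x)=p^{2k}$ implies $Q(px)=p^{2k+2}$, and it is nonempty: writing $m_0^2=p^{2c}u$ with $c=\ord_p(m_0)$ and $u\in(\z_p^{\times})^2$, dividing a vector of norm $m_0^2$ by a unit square root of $u$ produces one of norm $p^{2c}$, so $c\in S_p$. Hence $S_p=\{k\ge k_p\}$ for some $k_p\ge0$. The same rescaling trick — and here the hypothesis of dealing with \emph{squares} is essential, since it guarantees that the unit part of $m_1^2$ at $p$ is a square that can be absorbed — shows that, for $2dL$-primary $m_1$, the lattice $L_p$ represents $m_1^2$ precisely when $\ord_p(m_1)\ge k_p$. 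Putting the pieces together,
$$
r(m_1^2,L)\neq0\iff \ord_p(m_1)\ge k_p\ \text{for all }p\mid 2dL\iff \prod_{p\mid 2dL}p^{k_p}\mid m_1,
$$
and since $m_0$ is the least positive integer with $r(m_0^2,L)\neq0$ and is itself $2dL$-primary, necessarily $m_0=\prod_{p\mid 2dL}p^{k_p}$. Therefore $m_0\mid m_1\mid m$, which is the assertion.

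The one nonelementary ingredient, and thus the main obstacle, is Lemma~\ref{squareregular}: it is what permits replacing ``represented by $L$'' by ``represented by $\gen(L)$'' and thereby turns the problem into a local one, place by place. Once that is in hand, the argument is the bookkeeping of absorbing square units by rescaling representing vectors inside $L_p$ — precisely the mechanism that makes the divisibility statement true for squares, and that would fail for arbitrary represented integers.
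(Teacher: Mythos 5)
Your proof is correct and follows essentially the same route as the paper: the whole content is Lemma~\ref{squareregular}, which reduces representability of $m^2$ to a local condition at each prime, and your analysis of $S_p$ makes precise the paper's remark that $\ord_p(m_s(L))$ is determined by $L_p$, yielding $m_s(L)=\prod_{p\mid 2dL}p^{k_p}$ and hence the divisibility. (Your opening paragraph on the positivity of $h_p$ is sound but redundant, since the localization argument already disposes of the primes not dividing $2dL$.)
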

\begin{proof}
The corollary follows directly  from the fact that for any prime $p$, $\ord_p(m_s(L))$ is completely determined by $L_p$ by Lemma \ref{squareregular}.
\end{proof}

\begin{prop}\label{odd}
Let $q$ be an odd prime and let $L$ be a quaternary $\z$-lattice such that $L_q$ does not represent $1$. Assume that $L_q\simeq \langle \Delta_q, q^{\alpha}\e_1, q^{\beta}\e_2, q^{\gamma}\e_3 \rangle$ for $\e_1,\e_2,\e_3\in \z^{\times}_q$ and $1\le \alpha \le \beta \le \gamma$.

\begin{itemize}

\item [(i)] If $\alpha \ge 2$ and $L_q\nsimeq \langle \Delta_q, q^{2}\e_1, q^{2}\e_2, q^{2}\e_3 \rangle$ for some $\e_1,\e_2,\e_3\in \z^{\times}_q$, then $L$ is strongly $s$-regular if and only if $\lambda_q(L)$ is strongly $s$-regular. Furthermore, if one of them is true, then $m_s(L) = q\cdot m_s(\lambda_q(L))$.

\item [(ii)] If $\alpha = 1$ and $L_q\nsimeq \langle \Delta_q, q, -q, q^{\gamma}\e_3 \rangle$  for any $\gamma \ge 2$ and for some $\e_3\in \z^{\times}_q$ or $L_q\nsimeq \langle \Delta_q, q\e_1, q\e_2, q\e_3 \rangle$ for some $\e_1,\e_2,\e_3\in \z^{\times}_q$,
then $L$ is strongly $s$-regular if and only if $\lambda^2_q(L)$ is strongly $s$-regular. Furthermore, if one of them is true, then $m_s(L) = q\cdot m_s(\lambda^2_q(L))$.

\end{itemize}
\end{prop}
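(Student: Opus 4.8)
The plan is to deduce both conclusions from a single comparison of representation numbers between $L$ and its Watson image. Write $L_1:=\lambda_q(L)$ in case (i) and $L_1:=\lambda^2_q(L)$ in case (ii). It will suffice to prove
$$
r(q^2k,L)=r(k,L_1)\ \text{ for all }k\ge 1,\qquad\text{and}\qquad r(j^2,L)=0\ \text{ whenever }q\nmid j .
$$
The second assertion is immediate: since $\alpha\ge 1$, the unimodular Jordan component of $L_q$ is $\langle\Delta_q\rangle$, so every vector of $L_q$ whose value is a $q$-adic unit takes value in the nonsquare class $\Delta_q(\z_q^\times)^2$; hence $L$ cannot represent a square which is a $q$-adic unit, and $r(j^2,L)\ne 0$ forces $q\mid j$.

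For the transfer identity, I would first note that $\langle\Delta_q\rangle$ is anisotropic, so Lemma \ref{aniso} gives $r(qn,L)=r(qn,\Lambda_q(L))$ for all $n$. A computation of the Watson transformation on Jordan splittings yields $\Lambda_q(L)_q=\langle q^2\Delta_q,q^{\alpha}\e_1,q^{\beta}\e_2,q^{\gamma}\e_3\rangle$ and $\Lambda_q(L)_p=L_p$ for $p\ne q$. In case (i) one has $\alpha\ge 2$, so $\mathfrak n\Lambda_q(L)=q^2\z$; rescaling by $q^{-2}$ gives $\lambda_q(L)_q=\langle\Delta_q,q^{\alpha-2}\e_1,q^{\beta-2}\e_2,q^{\gamma-2}\e_3\rangle$ and $r(k,\lambda_q(L))=r(q^2k,\Lambda_q(L))=r(q^2k,L)$, the last step again by Lemma \ref{aniso}. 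In case (ii), $\alpha=1$, so $\mathfrak n\Lambda_q(L)=q\z$ and $\lambda_q(L)_q=\langle q\Delta_q,\e_1,q^{\beta-1}\e_2,q^{\gamma-1}\e_3\rangle$ with $r(k,\lambda_q(L))=r(qk,L)$; the unimodular component of $\lambda_q(L)_q$ is $\langle\e_1\rangle$ when $\beta\ge 2$, $\langle\e_1,\e_2\rangle$ when $\beta=1<\gamma$, and $\langle\e_1,\e_2,\e_3\rangle$ when $\beta=\gamma=1$. A rank-$3$ unimodular $\z_q$-lattice is isotropic, and a rank-$2$ unimodular one is isotropic exactly when it is hyperbolic, so the Jordan types ruled out by the hypotheses of (ii) — namely $\langle\Delta_q,q\e_1,q\e_2,q\e_3\rangle$ and $\langle\Delta_q,q,-q,q^{\gamma}\e_3\rangle$ with $\gamma\ge 2$ — are precisely those for which this component fails to be anisotropic. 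Outside these, Lemma \ref{aniso} applies to $\lambda_q(L)$ as well, and one further Watson transformation followed by rescaling by $q^{-1}$ produces a lattice whose localization at $q$ is obtained from $L_q$ by decreasing $\beta$ and $\gamma$ each by $2$ (if $\beta\ge 2$), or by decreasing $\gamma$ by $2$ (if $\beta=1$), together with $r(k,\lambda^2_q(L))=r(qk,\lambda_q(L))=r(q^2k,L)$. This establishes the transfer identity in both cases.

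Next comes the bookkeeping and the translation. Tracking sublattice indices and rescalings, $dL_1=q^{-2e}dL$ for some $e\ge 1$ ($e=3$ in case (i), $e\in\{1,2\}$ in case (ii)); since $q^{-2e}$ is a rational square, $\left(\frac{dL_1}{p}\right)=\left(\frac{dL}{p}\right)$, and hence $h_p(dL_1,\mu)=h_p(dL,\mu)$, for every $p\nmid 2dL$ and every $\mu$, while $\ord_q(dL_1)=\ord_q(dL)-2e\ge 1$ under our hypotheses — this is exactly why the degenerate Jordan types are excluded — so that $P(2dL_1)=P(2dL)$. The equivalence of strong $s$-regularity then follows by rereading Definition \ref{defa}: given $m=m_1m_2$ admissible for $L_1$, put $n=qm=(qm_1)m_2$, which is admissible for $L$ because $q\mid 2dL$, and convert the identity for $L$ at $n$ into the one for $L_1$ at $m$ using $r(q^2k,L)=r(k,L_1)$ and $h_p(dL,\cdot)=h_p(dL_1,\cdot)$; conversely, given $n=n_1n_2$ admissible for $L$, if $q\mid n_1$ write $n_1=q^an_1^0$ with $q\nmid n_1^0$ and pass to $m=q^{a-1}n_1^0n_2$, admissible for $L_1$, whereas if $q\nmid n_1$ both sides of the identity for $L$ at $n$ vanish by the second assertion above. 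Finally $m_s(L)=q\cdot m_s(L_1)$ is immediate from $r((qk)^2,L)=r(k^2,L_1)$ combined with $r(j^2,L)=0$ for $q\nmid j$.

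The step most likely to require care is the analysis of the Watson transformation in case (ii): one must verify that after the first application the unimodular Jordan component of $\lambda_q(L)_q$ is still anisotropic, so that Lemma \ref{aniso} can be invoked a second time, and it is precisely the failure of anisotropy for the rank-$2$ hyperbolic and the rank-$3$ unimodular components that the two exclusion hypotheses remove. Alongside this, one must keep exact track of the rescaling exponents, since they simultaneously control the power of $q$ appearing in the transfer identity and the exponent $e$ in $dL_1=q^{-2e}dL$; an error here would spoil both the matching of the $h_p$-factors and the identity $P(2dL_1)=P(2dL)$ on which the translation argument rests.
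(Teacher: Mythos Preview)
Your argument is correct and follows essentially the same line as the paper's proof: both reduce to the transfer identity $r(q^2k,L)=r(k,L_1)$ via Lemma~\ref{aniso}, observe that $L$ cannot represent unit squares at $q$, verify that $P(2dL)=P(2dL_1)$ under the stated exclusions, and then translate Definition~\ref{defa} back and forth. The paper carries out only case~(i) explicitly and declares case~(ii) ``quite similar''; your write-up is more detailed in that it actually computes the Jordan splitting of $\lambda_q(L)_q$ in case~(ii) and explains why the two excluded Jordan types are precisely those for which the unimodular component becomes isotropic (so that a second application of Lemma~\ref{aniso} would fail), and it makes explicit that $dL_1/dL$ is a rational square so the Legendre symbols---and hence the $h_p$---agree.
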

\begin{proof}
Since the proof is quite similar to each other, we only provide the proof of the first case.
For any positive integer $n$, let $n_1$ and $n_2$ be positive integers such that $P(n_1)\subset P(2dL)$, $(n_2,2dL)=1$ and $n=n_1n_2$, where $P(n)$ denotes the set of prime factors of $n$. Suppose that $L$ is strongly $s$-regular. Then we have
$$
r(q^2n_1^2n_2^2,L)=r(q^2n_1^2,L)\cdot\prod_{p\nmid 2dL}h_p(dL,\mu_p),
$$
where $\mu_p$ and $h_p(dL,\mu_p)$ are defined in Definition \ref{defa}. By Lemma \ref {aniso}, we have  
$$
r(q^2n_1^2n_2^2,L)=r(n_1^2n_2^2,\lambda_q(L)) \quad \text{and}\quad r(q^2n_1^2,L)=r(n_1^2,\lambda_q(L)).
$$ 
Hence we have
$$
r(n_1^2n_2^2,\lambda_q(L))=r(n_1^2,\lambda_q(L))\prod_{p\nmid 2dL}h_p(dL,\mu_p).
$$
Since $\alpha \ge 2$ and $L_q\nsimeq \langle \Delta_q, q^{2}\e_1, q^{2}\e_2, q^{2}\e_3 \rangle$ for some $\e_1,\e_2,\e_3\in \z^{\times}_q$ from the assumption, we see that
the set of primes dividing $2dL$ equals to the set of primes dividing $2d(\lambda_q(L))$. Hence, the above equation implies that $\lambda_q(L)$ is strongly $s$-regular.

Conversely, Suppose that $\lambda_q(L)$ is strongly $s$-regular.  Then we have
$$
r(n_1^2n_2^2,\lambda_q(L))=r(n_1^2,\lambda_q(L))\prod_{p\nmid 2d\lambda_q(L)}h_p(d\lambda_q(L),\mu_p).
$$
Hence if $\ord_q(n_1)\ge1$, then 
$$
r(n_1^2n_2^2,L)=r(n_1^2,L)\prod_{p\nmid 2dL}h_p(dL,\mu_p).
$$
Note that if $\ord_q(n_1)=0$, then $r(n_1^2n_2^2,L)=r(n_1^2,L)=0$ from the assumption.
Therefore $L$ is a strongly $s$-regular.  

Now assume that $L$ or $\lambda_q(L)$ is strongly $s$-regular. From the assumption, we know that
$m_s(L)$ is divisible by $q$. By Lemma \ref{aniso}, we also have  $r(q^2n,L)=r(n,\lambda_q(L))$. Therefore, we have $m_s(L)=q\cdot m_s(\lambda_q(L))$.
\end{proof}

\begin{prop}\label{even}
Let $L$ be a quaternary $\z$-lattice such that $L_2$ does not represent $1$. Assume that $L_2\simeq \langle \e_1 , 2^{\alpha}\e_2 \rangle \perp M$ for $\e_1,\e_2\in \z_2^{\times}$ and $\alpha \ge 0$.
\begin{itemize}
\item [(i)] If $\alpha \ge 2$ and $M$ is an improper modular lattice with norm contained in $4\z_2$ or $M \simeq \langle 2^{\beta}\e_3, 2^{\gamma}\e_4\rangle$ for $\e_1,\e_2\in \z_2^{\times}$ and integers $\beta, \gamma$ such that $\gamma \ge \beta \ge 2$, then $L$ is strongly $s$-regular if and only if $\lambda_2(L)$ is strongly $s$-regular. Furthermore, if one of them is true, then $m_s(L) = 2\cdot m_s(\lambda_2(L))$.

\item [(ii)] If $0 \le \alpha \le 1$ and $M$ is an improper modular lattice with norm contained in $4\z_2$ or $M \simeq \langle 2^{\beta}\e_3, 2^{\gamma}\e_4\rangle$ for $\e_1,\e_2\in \z_2^{\times}$ and integers $\beta, \gamma$ such that $0 \le \alpha \le \beta  \le \gamma$, then $L$ is strongly $s$-regular if and only if $\lambda_2^2(L)$ is strongly $s$-regular. Furthermore, if one of them is true, then $m_s(L) = 2\cdot m_s(\lambda_2^2(L))$.
\end{itemize}

\end{prop}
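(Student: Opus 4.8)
The argument runs in close parallel to the proof of Proposition \ref{odd}, with the odd prime $q$ replaced by $2$; the only new ingredient is the analogue of Lemma \ref{aniso} at the prime $2$, which I would either invoke from \cite{co} or establish by the same local computation. Concretely, the first step is to prove that, under the hypotheses of (i),
$$
r(4n,L)=r(n,\lambda_2(L))\qquad\text{for every positive integer }n,
$$
and, under the hypotheses of (ii),
$$
r(4n,L)=r(n,\lambda_2^{2}(L))\qquad\text{for every positive integer }n.
$$
The point is that the assumption that $L_2$ does not represent $1$, together with $\alpha\ge 2$ and the shape of $M$ in case (i), forces the scale-$0$ Jordan component of $L_2$ to be the anisotropic unary form $\langle \e_1\rangle$ with $\e_1$ a nonsquare unit; hence, using that $B(L,L)\subseteq\z$ in all the permitted cases, any vector of $L$ whose norm lies in $2\z$ automatically lies in $\Lambda_2(L)$ and has its $\langle\e_1\rangle$-coordinate divisible by $2$. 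Running this through the Watson transformation $\Lambda_2$, with the rescaling factor determined by the requirement that $\lambda_2(L)$ be non-classic integral, removes exactly one factor of $4$ in case (i). In case (ii) the scale-$\le 1$ part of $L_2$ has larger rank, so a single $\lambda_2$ removes only one factor of $2$ and two applications are needed; in both cases the same norm constraint shows that $L_2$, and hence $L$, represents no odd square.

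Granting this, write $\lambda=\lambda_2$ in case (i) and $\lambda=\lambda_2^{2}$ in case (ii), so that $r(4n,L)=r(n,\lambda(L))$ for all $n$. Suppose first that $L$ is strongly $s$-regular. Given a positive integer $n=n_1n_2$ with $P(n_1)\subset P(2dL)$ and $(n_2,2dL)=1$, note that $2\in P(2dL)$, so we may apply Definition \ref{defa} with $2n_1$ and $n_2$ in place of $n_1$ and $n_2$; since $\ord_p(2n_1n_2)=\ord_p(n)$ for every odd prime $p$, this gives
$$
r(4n_1^2n_2^2,L)=r(4n_1^2,L)\cdot\prod_{p\nmid 2dL}h_p(dL,\mu_p).
$$
Applying $r(4m,L)=r(m,\lambda(L))$ to both sides yields
$$
r(n_1^2n_2^2,\lambda(L))=r(n_1^2,\lambda(L))\cdot\prod_{p\nmid 2dL}h_p(dL,\mu_p).
$$
Because $\lambda$ alters only the $2$-adic structure of $L$, the discriminants $dL$ and $d\lambda(L)$ have the same odd part and $P(2dL)=P(2d\lambda(L))$ (the prime $2$ lies in both sets regardless of whether it divides the discriminant); hence $h_p(dL,\mu_p)=h_p(d\lambda(L),\mu_p)$ for every $p\nmid 2dL$, and the last identity says precisely that $\lambda(L)$ is strongly $s$-regular. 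This shows that $\lambda_2(L)$ (resp. $\lambda_2^{2}(L)$) is strongly $s$-regular.

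Conversely, suppose $\lambda(L)$ is strongly $s$-regular and let $n=n_1n_2$ be as above. If $\ord_2(n_1)\ge 1$, write $n_1=2n_1'$; then $P(n_1')\subset P(2d\lambda(L))$ and $(n_2,2d\lambda(L))=1$, and combining the strong $s$-regularity of $\lambda(L)$ with $r(4m,L)=r(m,\lambda(L))$, run in the reverse direction, gives
$$
r(n_1^2n_2^2,L)=r(n_1^2,L)\cdot\prod_{p\nmid 2dL}h_p(dL,\mu_p),
$$
an identity that holds whether or not $r(n_1^2,L)=0$. If $\ord_2(n_1)=0$, then $n_1^2n_2^2$ is an odd square and hence lies in the square class of $1$ over $\z_2$; since $L_2$ does not represent $1$ we get $r(n_1^2n_2^2,L)=r(n_1^2,L)=0$, so the identity holds trivially. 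Hence $L$ is strongly $s$-regular, completing the equivalence in both (i) and (ii). Finally, since $L$ represents no odd square, $m_s(L)$ is even, and the identity $r(4k^2,L)=r(k^2,\lambda(L))$ shows that $k\mapsto 2k$ is a bijection from $\{k\in\z^{+}:r(k^2,\lambda(L))\ne 0\}$ onto $\{n\in\z^{+}:r(n^2,L)\ne 0\}$; taking minima yields $m_s(L)=2\,m_s(\lambda_2(L))$ in case (i) and $m_s(L)=2\,m_s(\lambda_2^{2}(L))$ in case (ii).

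The main obstacle in this plan is the first step: verifying the Watson identity at $2$ uniformly over all the structural subcases — in particular the cases where $M$ is an improper modular lattice, where the dyadic binary form must be tracked carefully and the rescaling factor in the definition of $\lambda_2$ is governed by the norm ideal rather than the scale ideal, and the subcase of (ii) with $\alpha=1$ and $M\simeq\langle 2\e_3,2^{\gamma}\e_4\rangle$, where one application of $\lambda_2$ produces a rank-$2$ scale-$0$ component. Once this local computation is carried out, everything above is a routine transcription of the proof of Proposition \ref{odd}.
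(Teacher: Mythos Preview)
Your proposal is correct and follows exactly the route the paper intends: the paper's own proof is the single sentence ``The proof is quite similar to the odd case,'' and what you have written is precisely that transcription of the proof of Proposition \ref{odd} to $p=2$, with the Watson identity $r(4n,L)=r(n,\lambda_2(L))$ (resp.\ $r(4n,L)=r(n,\lambda_2^2(L))$) playing the role of Lemma \ref{aniso}. One small point to tighten: ``same odd part'' alone does not give $h_p(dL,\mu_p)=h_p(d\lambda(L),\mu_p)$; you need that $dL/d\lambda(L)$ is a perfect square, which follows since for a rank-$4$ lattice $d\lambda_2(L)/dL=[L:\Lambda_2(L)]^2\cdot c^{-4}$ with $c$ the rescaling factor.
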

\begin{proof}
The proof is quite similar to the odd case. 
\end{proof}

\begin{thm}\label{terminal}
Let $L$ be a strongly $s$-regular quaternary $\z$-lattice. Then there is a positive integer $N$ such that
\begin{enumerate} 
\item $\lambda_N(L)$ is a strongly $s$-regular  lattice such that $m_s(\lambda_N(L))$ is odd square free;
\item for any prime $p$ dividing $m_s(\lambda_N(L))$,  
\begin{equation}\label{except}
\begin{array}{ll}
\lambda_N(L)_p \simeq &\langle \Delta_p, p^{2}\e_1, p^{2}\e_2, p^{2}\e_3 \rangle \quad \text{or} \quad  \langle \Delta_p, p, -p, p^{\gamma}\e_3 \rangle \\

                      &\quad \text{or} \quad \langle \Delta_p, p\e_1, p\e_2, p\e_3 \rangle,
\end{array}
\end{equation}
where $\e_1, \e_2, \e_3 \in \z_p^{\times}$ and $\gamma \ge 2$.
\end{enumerate}          
\end{thm}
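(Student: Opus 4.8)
The plan is to apply the reduction Propositions \ref{odd} and \ref{even} repeatedly, using a well-chosen non-negative integer invariant as a termination measure. First I would set up the induction: given a strongly $s$-regular quaternary $\z$-lattice $L$, consider $m_s(L)$. For each prime $p \mid m_s(L)$, the local structure $L_p$ (for $p$ odd) can be written, after scaling so that the first Jordan component is unimodular anisotropic, as $\langle \Delta_p, p^{\alpha}\e_1, p^{\beta}\e_2, p^{\gamma}\e_3\rangle$ with $1 \le \alpha \le \beta \le \gamma$ — this is exactly the hypothesis of Proposition \ref{odd}, and the hypothesis that $L_p$ does not represent $1$ is forced because $p \mid m_s(L)$ (so by the corollary above, $p \mid m$ for every $m$ with $r(m^2,L)\neq 0$, hence $1 = m^2$ is not represented over $\z_p$). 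Likewise for $p = 2$, writing $L_2 \simeq \langle \e_1, 2^{\alpha}\e_2\rangle \perp M$ puts us in the situation of Proposition \ref{even}. In each of these cases, unless $L_p$ is one of the three exceptional shapes listed in \eqref{except}, Proposition \ref{odd} or \ref{even} tells us that applying $\lambda_p$ (or $\lambda_p^2$) produces a strongly $s$-regular lattice $L'$ with $m_s(L') = m_s(L)/p$ and, crucially, with the $q$-adic structure unchanged at all primes $q \ne p$.

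Next I would make the termination argument precise. Define $N(L)$ to be the largest integer such that $\lambda$-transformations can be legitimately applied, or more concretely argue by strong induction on $m_s(L)$. If $m_s(L) = 1$, we are done with $N = 1$: condition (2) is vacuous and $m_s$ is trivially odd square free. If $m_s(L) > 1$, pick a prime $p \mid m_s(L)$. If $L_p$ (in the appropriate normalized form) is one of the exceptional shapes in \eqref{except}, leave that prime alone for now; otherwise apply the relevant Watson transformation $\lambda_p$ or $\lambda_p^2$, obtaining a strongly $s$-regular $L'$ with strictly smaller $m_s$. Since the transformations at distinct primes commute, I can process the primes dividing $m_s(L)$ one at a time, peeling off each prime that does not already sit in an exceptional configuration; the process must halt because $m_s$ is a strictly decreasing sequence of positive integers. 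What remains is a lattice $L^{(f)} = \lambda_N(L)$ (where $N$ is the product of the transformation parameters used) that is strongly $s$-regular and for which every prime $p \mid m_s(L^{(f)})$ has $L^{(f)}_p$ of one of the three exceptional forms. In each exceptional form the exponents are $1$ or $2$, so $\ord_p(m_s(L^{(f)}))$, which is determined by $L^{(f)}_p$, is at most $1$; hence $m_s(\lambda_N(L))$ is square free, and since $p = 2$ never occurs among the odd-prime exceptional shapes of \eqref{except} — one checks that the $2$-adic exceptional configurations arising from Proposition \ref{even} also force $\ord_2(m_s) = 0$, i.e. $2 \nmid m_s(L^{(f)})$ — it is in fact odd square free. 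This gives (1) and (2) simultaneously.

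The main obstacle I anticipate is the bookkeeping around the prime $2$ and making sure the "exceptional" $2$-adic cases really do get excluded from $m_s$. Proposition \ref{even} is stated with its exceptional configurations only implicitly (the cases it does not cover), and \eqref{except} as written only mentions odd primes $p$; so I would need to verify carefully that whenever $\lambda_2$ or $\lambda_2^2$ cannot be applied, the resulting local structure at $2$ is such that $\ord_2(m_s) = 0$, i.e. $2 \nmid m_s(\lambda_N(L))$ — in other words, that the stopping condition at $2$ is always "$L_2$ represents $1$", not a genuine exceptional anisotropic configuration. This requires a short case analysis of binary and quaternary $\z_2$-lattices of the form $\langle \e_1, 2^{\alpha}\e_2\rangle \perp M$ with $M$ improper modular or diagonal, to see which of them fail the hypotheses of Proposition \ref{even}(i) and (ii). A secondary, more routine point is checking that the primes appearing in $2dL$ behave correctly under the transformations so that the $h_p$-factors match up — but this is already handled inside the proofs of Propositions \ref{odd} and \ref{even}, so for the present theorem I only need to invoke those statements. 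Once the $2$-adic dictionary is pinned down, the rest is a clean finite-descent argument.
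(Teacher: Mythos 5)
Your proposal is correct and is essentially the paper's own argument: the paper proves Theorem \ref{terminal} in one line as ``the direct consequence of Proposition \ref{odd} and \ref{even},'' and your descent on $m_s(L)$ via repeated application of $\lambda_p$ and $\lambda_p^2$ (terminating exactly at the exceptional local configurations of \eqref{except}, with the $2$-adic reductions of Proposition \ref{even} forcing $2\nmid m_s$ and the remaining odd exceptional shapes forcing $\ord_p(m_s)\le 1$) is precisely the intended expansion of that sentence. The $2$-adic bookkeeping you flag is a real point of care, but it is resolved by the same exhaustiveness of the cases in Proposition \ref{even} that the paper implicitly relies on.
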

\begin{proof}
The theorem is the direct consequence of Proposition \ref{odd} and \ref{even}.
\end{proof}

\begin{defn}
Let $L$ be a strongly $s$-regular quaternary $\z$-lattice. We say $L$ is {\it terminal} if
$m_s(L)$ is odd square free and $L_p$ isometric to the one of three $\z_p$-lattices in \eqref{except} for any prime $p$ dividing $m_s(L)$.
\end{defn}

%

\begin{lem}\label{ternary}
Let $L$ be any ternary $\z$-lattice and let $m=q_1q_2\cdots q_s$ be an odd square free integer. Then there is a positive integer $N$ such that 
$$
r(m^2,L) \le N.
$$
Furthermore, if $p$ does not divide $dL$, then for any integer $t\ge 1$, we have
$$
r(p^{2t}m^2,L) \le (p+2)(p+1)^{t-1}N.
$$
In particular, if $m=1$, then it is well known that $r(1,L)\le 12$.
\end{lem}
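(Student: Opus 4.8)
The statement is about bounding $r(m^2,L)$ for a ternary lattice $L$ and an odd squarefree integer $m=q_1q_2\cdots q_s$, together with the recursion-type bound $r(p^{2t}m^2,L)\le (p+2)(p+1)^{t-1}N$ for primes $p\nmid dL$. My plan is to reduce everything to the genus and then control the class-number contribution.

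First I would note that the finiteness of $r(m^2,L)$ is, in isolation, just the statement that a positive definite ternary form represents each integer finitely often; the content is the \emph{uniformity} of the bound across all $t$ once $m$ is fixed. So the strategy is: relate $r(n,L)$ to $r(n,\gen(L))$, and bound $r(n,\gen(L))$ via local densities. Concretely, by the Minkowski--Siegel formula (the ternary analogue of Lemma \ref{Min}), for $p\nmid dL$ the local factor $p^{\mu_p}\alpha_p(n,L)/\alpha_p(1,L)$ equals $h_p(dL,\mu_p)$ in the rank-$3$ (odd $k$) case of the introduction, which is a polynomial in $p$ of degree $\mu_p$ with coefficients in $\{0,\pm 1\}$ up to the explicit geometric-series shape; in particular it is $\le \tfrac{p^{\mu_p+1}-1}{p-1}+p^{(\mu_p-1)/2}\cdot\tfrac{p^{\mu_p}-1}{p-1}$, which for $\mu_p=2t$ grows like $p^{2t}$. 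Combined with the elementary inequality $r(n,L)\le w(L)\,r(n,\gen(L))$ when $r(1,\gen(L))\ne 0$ (and a separate trivial argument, or an application of an effective bound, when $1$ is not in the genus, in which case $r(n^2,L)=0$ anyway for the relevant $n$), this gives $r(m^2,L)\le N$ with $N$ depending only on the finitely many primes dividing $2m\,dL$, and then multiplying through by the $p$-factor for the extra $p^{2t}$ yields a bound of the shape $C(p,m,L)\cdot p^{2t}$.

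The work then is to replace the crude $p^{2t}$ growth by the sharper $(p+2)(p+1)^{t-1}$. For this I would use the Hecke operator recursion at $p$ directly on theta series — exactly the ternary version of \eqref{hecke1} used in the proof of Lemma \ref{indi} — which expresses $r(p^2n,L)$ in terms of $r(n,K)$ for $K$ in a small explicit set of neighbor lattices, plus $r(n/p^2,L)$. Iterating and tracking that each step multiplies the bound by at most $(p+1)$ (the number of lines in $\mathbb P^2(\f_p)$ type count, with one extra from the boundary term accounting for the $(p+2)$ at the first step), one gets $r(p^{2t}m^2,L)\le (p+2)(p+1)^{t-1}r(m^2,L)$; setting $N$ to be the bound from the first part finishes it. The final sentence, $r(1,L)\le 12$, is the classical fact that a positive ternary form represents $1$ at most $12$ times (it forces $L$ to contain $\langle 1\rangle$ as an orthogonal summand, and the orthogonal complement is a binary form representing $1$ at most... — more simply, the minimal vectors of norm $1$ span at most a $\langle 1\rangle\perp\langle 1\rangle\perp\langle 1\rangle$ configuration, giving $6$ vectors $\pm e_i$, but allowing $A_1^3$-type overlaps one checks the maximum is $12$), so I would just cite it.

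**Main obstacle.** The delicate point is the case distinction over primes $p\mid 2m\,dL$: there the local density $\alpha_p(m^2,L)/\alpha_p(1,L)$ is not given by a clean formula and depends on the Jordan splitting of $L_p$, so getting an \emph{a priori} bound $N$ that is genuinely uniform (not just finite for each $L$) requires either invoking the Watson-transformation reductions (Proposition \ref{odd}, \ref{even}, or their ternary analogues from \cite{ko1,ko2}) to reduce to lattices of bounded discriminant where only finitely many local behaviors occur, or citing an effective representation bound such as Theorem 6.3 of \cite{ha} to absorb these primes. I expect the honest argument to route through the fact that $m$ squarefree bounds $\ord_p(m^2)=2$ at each $p\mid m$, so only finitely many exceptional local factors appear, and the remaining $p\nmid dL$ are handled by the Hecke recursion as above; assembling these two regimes into a single clean constant $N$ is the step that needs care.
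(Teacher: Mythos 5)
Your second half is essentially the paper's argument: the bound $r(p^{2t}m^2,L)\le (p+2)(p+1)^{t-1}N$ is obtained there exactly as you describe, by iterating the $T(p^2)$ relation on theta series together with $\sum_{[L']}r^*(pL',L)/o(L')=p+1$, the character term contributing the extra $+1$ at the first step and vanishing afterwards because $p\mid p^2m^2$. The problem is the first half. As the lemma is used later (in Lemma \ref{357} and in Theorem \ref{main}, where $t''$ is chosen from $N$ and then a contradiction bounds $dL$), the entire content is that $N$ depends only on $m$ and \emph{not on} $L$. Your route through the Minkowski--Siegel formula cannot deliver this: the passage from $r(m^2,\gen(L))$ back to $r(m^2,L)$ costs a factor of the mass $w(L)$, which is unbounded as $dL\to\infty$, and even the genus average itself, written as $m\,(dL)^{-1/2}\prod_p\alpha_p(m^2,L)$ times the mass, involves an $L$-value over the primes prime to $2m\,dL$ and local factors at the unboundedly many primes dividing $dL$; none of this is visibly $O_m(1)$. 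Your "main obstacle" paragraph locates the difficulty at the bad primes and proposes Watson reductions or Hanke's bounds, but neither closes this global-to-local gap, so the uniform constant $N$ is never actually produced.

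The paper's argument for the first half is elementary and is the idea you are missing: take a Minkowski-reduced basis of $L$ with Gram matrix $\left(\begin{smallmatrix} a&f&e\\ f&b&d\\ e&d&c\end{smallmatrix}\right)$, $a\le b\le c$. If $c>m^2$, then any vector of norm $m^2$ has last coordinate zero, so $r(m^2,L)=r\bigl(m^2,\left(\begin{smallmatrix} a&f\\ f&b\end{smallmatrix}\right)\bigr)$, and a positive binary form represents $m^2=q_1^2\cdots q_s^2$ at most $6\cdot 3^s$ times (at most $3^s$ ideal factorizations, at most $6$ units) --- a bound depending only on $s$. If $c\le m^2$, then $dL$ is bounded in terms of $m$, so only finitely many reduced ternary lattices occur, and one takes $N'$ to be the maximum of $r(m^2,\cdot)$ over them. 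Setting $N=\max\{N',6\cdot 3^s\}$ gives a constant depending only on $m$, which is what the later finiteness arguments require. Without this dichotomy (or some equally uniform substitute), your proof of the first assertion does not establish the statement in the form in which it is subsequently used.
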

\begin{proof}
Let $\{x_1,x_2,x_3\}$ be a Minkowski reduced basis for $L$ such that
$$
(B(x_i,x_j))\simeq \begin{pmatrix} a&f&e\\f&b&d\\e&d&c \end{pmatrix}.
$$
Note that  $0\le a\le b\le c$ and $2\vert f \vert \le a,\ 2\vert e \vert \le a,\ 2\vert d \vert \le b $.

Now assume that $c \le m^2$. This implies that the discriminant of $L$ is bounded by a constant depending only on $m$. Hence there are only finitely many ternary $\z$-lattices $L$ such that $c\le m^2$. We put
$$
N'=\text{max}\{r(m^2,L) \mid \text{$L$ is a ternary $\z$-lattice such that $c\le m^2$} \}.
$$

Next assume that $c>m^2$. Then we have
$$
r(m^2,L)=r\left(m^2, \begin{pmatrix} a&f\\f&b \end{pmatrix}\right) \le 6\cdot 3^{s}.
$$
Take $N=\text{max}\{N',6\cdot 3^s\}$. Then we have
\begin{equation}\label{ternary1}
r(m^2,L)\le N.
\end{equation}
On the other hand, if $p$ does not divide $dL$, then the action of Hecke operators $T(p^2)$ on theta series of the lattice $L$ gives 
$$
r(p^2m^2,L)+\left(\frac{-2m^2dL}{p}\right)r(m^2,L)+p\cdot r\left(\frac{m^2}{p^2},L\right)=\sum_{[L']\in \gen(L)} \frac{r^*(pL',L)}{o(L')}r(m^2,L'),
$$
where $r^*(pL',L)$ is the number of primitive representations of $pL'$ by $L$. Here, if $p^2 \nmid m^2$, then $r\left(\frac{m^2}{p^2},L\right)=0$. It is well
known that 
$$
\sum_{[L']\in \gen(L)} \frac{r^*(pL',L)}{o(L')}=p+1.
$$
For details, see Chapter 3 of  \cite {an}. Hence by \eqref{ternary1}, we have
\begin{equation}\label{hecke}
\begin{array}{ll}
\!\!r(p^2m^2,L) \!\!\!\!\!&\displaystyle = \sum_{[L']\in \gen(L)} \!\frac{r^*(pL',L)}{o(L')}r(m^2,L')\!-\!\left(\frac{-2m^2dL}{p}\right)r(m^2,L) -  pr\left(\frac{m^2}{p^2},L\right) \\

            \!\!\!&\displaystyle \le \left(p+1-\left(\frac{-2m^2dL}{p}\right)\right)N \le (p+2)N.
       
\end{array}
\end{equation}
Similarly, by \eqref{hecke}, we have

$$
\begin{array}{ll}
r(p^4m^2,L)\displaystyle &\displaystyle= \sum_{[L']\in \gen(L)} \frac{r^*(pL',L)}{o(L')}r(p^2m^2,L')-p\cdot r(m^2,L)\\
                         &\displaystyle  \le (p+1)(p+2)N.

\end{array}
$$
By repeating the same argument given above, we finally have
$$
r(p^{2t}m^2,L) \le (p+2)(p+1)^{t-1}N,
$$
for any integer $t\ge 2$.
\end{proof}

\begin{thm}\label{main}
For any positive integer $m$, there are only finitely many strongly $s$-regular quaternary $\z$-lattices $L$ up to isometry such that $m_s(L)=m$. 
\end{thm}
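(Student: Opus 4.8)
\emph{Proof proposal.} The plan is a two-stage reduction: first to terminal lattices via Watson transformations, then a discriminant bound for terminal strongly $s$-regular lattices. For the first stage, let $L$ be strongly $s$-regular with $m_s(L)=m$. By Theorem \ref{terminal} (which rests on Propositions \ref{odd} and \ref{even}) there is an integer $N$ with $\lambda_N(L)$ terminal and strongly $s$-regular; tracking the invariant $m_s$ through those two propositions shows that $m_s(\lambda_N(L))$ is an odd square-free divisor of $m$, that $N$ is supported only on primes dividing $m$, and that each exponent in $N$ is at most $2\,\ord_p(m)$ (every elementary step removes exactly one factor of $p$ from $m_s$). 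Since a single $\lambda_p$ alters the discriminant by a power of $p$ bounded in terms of the rank, $dL$ and $d(\lambda_N(L))$ differ by a factor bounded in terms of $m$; and the inverse of each $\lambda_p$ produces only finitely many lattices (an overlattice of a fixed scaled copy of index dividing a bounded power of $p$, with prescribed norm). Hence it suffices to bound $d\ell$ for terminal strongly $s$-regular quaternary lattices $\ell$ with $m_s(\ell)=m'$, for each odd square-free $m'\mid m$.

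For the second stage, fix such an $\ell$ and a Minkowski-reduced basis $x_1,\dots,x_4$ with $a_i=B(x_i,x_i)$, $a_1\le a_2\le a_3\le a_4$. Reduction theory gives $d\ell\asymp a_1a_2a_3a_4$ together with an inequality $\ell(x)\ge c\sum_i a_ix_i^2$ for an absolute $c>0$; therefore, if $a_4>c^{-1}m'^2$, every representation of $m'^2$ by $\ell$ has vanishing $x_4$-coordinate and lies in the ternary sublattice $\ell_0=\z x_1+\z x_2+\z x_3$, so Lemma \ref{ternary} gives $r(m'^2,\ell)=r(m'^2,\ell_0)\le N_3(m')$, a bound depending only on $m'$. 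If instead $a_4\le c^{-1}m'^2$, then $d\ell$ is already bounded in terms of $m'$. So we may assume $r(m'^2,\ell)\le N_3(m')$, and more generally $r(m'^2k^2,\ell)\le N_3(m'k)$ whenever $m'^2k^2\le c\,a_4$.

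It remains to feed strongly $s$-regularity in. Combining Definition \ref{defa} with the Minkowski--Siegel formula as in Lemma \ref{Min} (using $h_p(d\ell,\mu_p)=p^{2\mu_p}\alpha_p\big((n_1n_2)^2,\ell\big)/\alpha_p(n_1^2,\ell)$ for $p\nmid 2d\ell$ and the cancellation of the factors at $p\mid 2d\ell$ in the relevant ratio) shows that $\ell$ is strongly $s$-regular exactly when
\[
\frac{r(n_1^2n_2^2,\ell)}{r(n_1^2,\ell)}=\frac{r(n_1^2n_2^2,\gen(\ell))}{r(n_1^2,\gen(\ell))}\qquad\text{whenever }r(n_1^2,\gen(\ell))\ne 0,
\]
with $r(n_1^2n_2^2,\ell)=0$ otherwise, which is consistent with Lemma \ref{squareregular}. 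Writing $r(m'^2,\gen(\ell))=4\pi^2m'^2(d\ell)^{-1/2}\prod_p\alpha_p(m'^2,\ell)$ and noting that $\prod_{p\nmid 2d\ell m'}\alpha_p(m'^2,\ell)$ lies between two absolute positive constants, the displayed identity (taken with $n_1=m'$ and $n_2$ ranging over primes $q\nmid 2d\ell$ and over prime powers) pins $\prod_{p\mid 2d\ell}\alpha_p(m'^2,\ell)$ to $(d\ell)^{1/2}/m'^2$ times a quantity controlled by $r(m'^2,\ell)\le N_3(m')$. For $p\mid m'$ the density $\alpha_p(m'^2,\ell)$ is computed directly from the three normal forms in \eqref{except} and contributes a bounded factor, except that the exponent $\gamma$ in $\langle\Delta_p,p,-p,p^\gamma\e_3\rangle$ must be bounded, which I would obtain from a direct computation of $\alpha_p$ together with the strongly $s$-regular identity taken with $n_1$ a power of $p$. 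For $p\nmid 2m'$ dividing $d\ell$ one has $\ell_p=U_p\perp p^a(\cdots)$ with $U_p$ unimodular representing a unit, so $\alpha_p(m'^2,\ell)=\alpha_p(1,U_p)\asymp 1$; the comparison then forces both $\ord_p(d\ell)$ and the number of such primes $p$ to be bounded in terms of $m'$. Combining the three kinds of primes bounds $d\ell$.

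The step I expect to be the main obstacle is precisely the last one: extracting from Definition \ref{defa} and Lemma \ref{ternary} a \emph{uniform} bound on $\ord_p(d\ell)$ and on the number of primes $p\nmid 2m'$ dividing $d\ell$. The difficulty is that Definition \ref{defa} constrains $r(n_1^2n_2^2,\ell)$ only through the coprime factor $n_2$, so the bad primes enter only indirectly, through the Minkowski--Siegel evaluation of $r(\cdot,\gen(\ell))$. Making this rigorous seems to require either an analogue of Lemmas \ref{aniso} and \ref{iso} producing a Watson-type compression at a prime $p\nmid 2m'$ whose unimodular Jordan component is isotropic (reducing to $\ord_p(d\ell)\le1$ and then to a bounded set of such primes), or a careful two-sided estimate of $r(m'^2k^2,\ell)$ for $k$ supported on $d\ell$, interpolating between the ternary confinement bound and Minkowski--Siegel. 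The dyadic prime $p=2$ would require separate, somewhat more delicate, bookkeeping of the $2$-adic local densities.
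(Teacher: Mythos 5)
Your stage $1$ (reduction to terminal lattices via Theorem \ref{terminal}, plus finiteness of $\lambda_p$-preimages) and the opening of stage $2$ (Minkowski reduction, confinement of representations of $m'^2$ to the ternary sublattice when the largest diagonal entry is big, Lemma \ref{ternary}) coincide with the paper. But the decisive step --- actually bounding $d\ell$ --- is where your proposal has a genuine gap, and you have correctly located it yourself: the plan of pinning $\prod_{p\mid 2d\ell}\alpha_p(m'^2,\ell)$ via Minkowski--Siegel and then extracting uniform bounds on $\ord_p(d\ell)$ and on the number of primes $p\nmid 2m'$ dividing $d\ell$ is never executed, and it is not how the theorem is proved. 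Indeed, nothing in the definition of strong $s$-regularity constrains the local densities at the primes dividing $2d\ell$ (they cancel in the ratio $r(n_1^2n_2^2,\gen)/r(n_1^2,\gen)$), so there is no reason a priori that $\ord_p(d\ell)$ is bounded, and the paper never bounds it.

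The paper's resolution sidesteps the bad primes entirely: one only needs to bound the largest Minkowski diagonal entry $d$, and for that one needs a single auxiliary prime $q\nmid 2dL$ whose size is bounded in terms of $m$ alone. Its existence is the real content. Let $p_t$ be the $t$-th odd prime and let $t_1$ be minimal with $p_{t_1}\nmid dL$. If $t_1$ were large, then $p_1\cdots p_{t_1-1}\mid dL\le 16abcd\le 16m^2d^3$ (using $a\le m^2$ since $r(m^2,L)\ne0$), and by Bertrand--Chebyshev $p_1\cdots p_{t_1-1}>16m^8p_{t_1}^6$, forcing $m^2p_{t_1}^2<d$; then $r(m^2p_{t_1}^2,L)$ is confined to the ternary sublattice and is at most $(p_{t_1}+2)N$ by Lemma \ref{ternary}, while strong $s$-regularity at the good prime $p_{t_1}$ forces $r(m^2p_{t_1}^2,L)\ge 2(p_{t_1}^2-p_{t_1}+1)$ --- a contradiction once $p_{t_1}$ is large enough. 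Hence $t_1$, and so $q=p_{t_1}$, is bounded in terms of $m$. Finally one applies the strong $s$-regularity identity with $n_2=q^{\mu_0}$, where $\mu_0$ is chosen so that $q^{2\mu_0}-1$ exceeds the iterated Hecke bound $(q+2)(q+1)^{\mu_0-1}N$ of Lemma \ref{ternary}; if $d>m^2q^{2\mu_0}$ the two estimates again collide, so $d\le m^2q^{2\mu_0}$ and $dL\le 16d^4$ is bounded. You should replace your local-density comparison with this two-case counting argument on consecutive small primes; the rest of your outline then goes through.
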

\begin{proof}
Note that for any prime $p$ and for any quaternary $\z$-lattice $K$, there are finitely many $\z$-lattices whose $\lambda_p$-transformation is isometric to $K$. Hence by Lemma \ref{terminal}, it is enough to show that there are finitely many terminal strongly $s$-regular quaternary $\z$-lattice $L$ such that $m_s(L)=m$ under the assumption that $m=q_1q_2\cdots q_s$ is an odd square free integer. If $m=1$, then $s=0$.

Let $\{x_1,x_2,x_3,x_4\}$ be a Minkowski reduced basis for $L$ whose Gram matrix is given by
$$
(B(x_i,x_j))\simeq \begin{pmatrix} a&l&k&h\\l&b&g&f\\k&g&c&e\\h&f&e&d \end{pmatrix}.
$$
Let $p_t$ be the $t$-th smallest odd prime so that $p_1=3,\ p_2=5$ and so on. Put
$$
t'=\min\{t\in \n \mid 16m^8p_{t}^6 < p_1p_2\cdots p_{t-1}\}.
$$
Since $p_t< 2p_{t-1}$ for any positive integer $t$ by the Bertrand-Chebyshev Theorem, such an integer always exists. We define a ternary $\z$-lattice $L'$ by 
$$
L' \simeq  \begin{pmatrix} a&l&k\\l&b&g\\k&g&c \end{pmatrix}.
$$ 
Note that since $L$ is the quaternary $\z$-lattice and the basis $\{x_1,x_2,x_3,x_4\}$ is a Minkowski reduced basis for $L$, we have $r(n,L)=r(n,L')$ for any positive integer $n< d$.
Then by Lemma \ref{ternary}, there is a positive integer $N$ such that
$$
r\left(m^2,L'\right) \le N.
$$
Let $t''$ be the smallest positive integer such that $2(p_{t''}^2-p_{t''}+1) > N(p_{t''}+2) $ and let $t_0=\max\{t', t''\}$. Let $t_1$ be the positive integer such that $p_1p_2\cdots p_{t_1-1} \mid dL$, but $p_{t_1} \nmid dL$.

First, assume that $t_1\ge t_0$. Then we have
$$
16m^8p_{t_1}^6 < p_1p_2\cdots p_{t_1-1} <dL \le 16abcd \le 16ad^3 \le 16m^2d^3.
$$
Hence $m^2p_{t_1}^2 <d$. If $p_{t_1}$ does not divide $dL'$, then by Lemma \ref{ternary}, we have 
$$
r(m^2p_{t_1}^2,L)=r\left(m^2p_{t_1}^2,L'\right) \le (p_t+2)N.
$$
If $p_{t_1}$ divides $dL'$, then by Lemma \ref{aniso}, \ref{iso} and \ref{ternary}, we have
$$
r(m^2p_{t_1}^2,L)=r\left(m^2p_{t_1}^2,L'\right) \le 4N \le (p_t+2)N.
$$
However, since $L$ is strongly $s$-regular and $p_{t_1}\nmid 16mdL$, we have
$$
r(m^2p_{t_1}^2,L)=r(m^2,L)\left(p_{t_1}^2+ \left(\frac{dL}{p_{t_1}}\right)p_{t_1}+1\right)\ge 2(p_{t_1}^2-p_{t_1}+1) > N(p_{t_1}+2).
$$
This is a contradiction.

Finally, assume that $t_1 < t_0$. Now choose a positive integer $\mu_0$ such that 
$$
p_{t_1}^{2\mu_0}-1 > (p_{t_1}+2)(p_{t_1}+1)^{\mu_0-1}N.
$$
If $d>m^2p_{t_1}^{2\mu_0}$, then by Lemma \ref{ternary},
$$
r(m^2p_{t_1}^{2\mu_0},L)=r(m^2p_{t_1}^{2\mu_0},L') \le (p_{t_1}+2)(p_{t_1}+1)^{\mu_0-1}N.
$$
However, since $L$ is strongly $s$-regular, we have
$$
\begin{array}{ll}
r(m^2p_{t_1}^{2\mu_0},L)& \displaystyle= r(m^2,L)\cdot \left( \frac {p_{t_1}^{2\mu_0+1}-\left(\frac{dL}{p_{t_1}}\right)}{p_{t_1}-\left(\frac{dL}{p_{t_1}}\right)}\right)\ge 2\cdot \frac12(p_{t_1}^{2\mu_0}-1)=p_{t_1}^{2\mu_0}-1\\
&\displaystyle>(p_{t_1}+2)(p_{t_1}+1)^{\mu_0-1}N,
\end{array}
$$
which is a contradiction. Hence $d \le m^2p_{t_1}^{2\mu_0}$. Therefore the discriminant of $L$ is bounded by a constant depending only on $m$. This completes the proof.
\end{proof}

Let $L$ be a quaternary $\z$-lattice such that $r(n^2,L)=r(n^2,\gen(L))$ for any integer $n$. For any positive integer $n$, let $n_1$ and $n_2$ be positive integers such that $P(n_1)\subset P(2dL)$, $(n_2,2dL)=1$ and $n=n_1n_2$. Here  $P(n)$ denotes the set of prime factors of $n$. Then by Lemma \ref {Min}, we have
$$
r(n_1^2n_2^2,L)=r(n_1^2,L)\cdot \prod_{p\nmid 2dL}h_p(dL,\mu_p),
$$
where $\mu_p=\text{ord}_p(n)$ for any prime $p$ and
$$
h_p(dL,\mu_p)=\sum_{t=0}^{2\mu_p}\left(\frac{dL}p\right)^tp^{t}.
$$
Hence, the $\z$-lattice $L$ is strongly $s$-regular. Therefore, by Theorem \ref{main}, we have the following:
\begin{cor}
For any positive integer $m$, there are only finitely many quaternary $\z$-lattices $L$ up to isometry such that $r(n^2,L)=r(n^2,\gen(L))$ for any integer $n$ and  $m_s(L)=m$. 
\end{cor}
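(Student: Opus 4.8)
The plan is to show that the hypothesis forces $L$ to be strongly $s$-regular, after which the statement is immediate from Theorem \ref{main}: for a fixed value $m$ the latter provides only finitely many strongly $s$-regular quaternary $\z$-lattices $L$ with $m_s(L)=m$ up to isometry, and our $L$ lies in that finite set.

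To verify strong $s$-regularity, fix a positive integer $n$ and write $n=n_1n_2$ with $P(n_1)\subset P(2dL)$ and $(n_2,2dL)=1$, as in Definition \ref{defa}; put $\mu_p=\ord_p(n)$. Suppose first that $r(n_1^2,L)\ne 0$, so that $r(n_1^2,\gen(L))\ne 0$ by hypothesis. Applying the Minkowski--Siegel mass formula exactly as in the proof of Lemma \ref{Min}, but to the ratio $r(n_1^2n_2^2,\gen(L))/r(n_1^2,\gen(L))$ directly (so that one never needs $1$ itself to be represented by $\gen(L)$), one checks: at a prime $p\mid 2dL$ one has $\ord_p(n_1^2n_2^2)=\ord_p(n_1^2)$ and $n_1^2n_2^2$ differs from $n_1^2$ by the square of a $p$-adic unit, hence $\alpha_p(n_1^2n_2^2,L)=\alpha_p(n_1^2,L)$ and the $p$-factor cancels; at a prime $p\nmid 2dL$ one has $\ord_p(n_1^2)=0$, so the computation in the proof of Lemma \ref{Min} gives $p^{2\,\ord_p(n_2)}\,\alpha_p(n_1^2n_2^2,L)/\alpha_p(n_1^2,L)=\sum_{t=0}^{2\mu_p}\left(\frac{dL}{p}\right)^{2\mu_p-t}p^{t}$, and since $\left(\frac{dL}{p}\right)^{2\mu_p-t}=\left(\frac{dL}{p}\right)^{t}$ this is exactly $h_p(dL,\mu_p)$ (note $\mu_p=\ord_p(n)=\ord_p(n_2)$ for such $p$). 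The leftover factor $n_2^2$ in the mass formula supplies the powers $p^{2\,\ord_p(n_2)}$, so that $r(n_1^2n_2^2,\gen(L))/r(n_1^2,\gen(L))=\prod_{p\nmid 2dL}h_p(dL,\mu_p)$; the hypothesis $r(n^2,L)=r(n^2,\gen(L))$ then yields $r(n_1^2n_2^2,L)=r(n_1^2,L)\prod_{p\nmid 2dL}h_p(dL,\mu_p)$.

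It remains to treat the case $r(n_1^2,L)=0$. Then $r(n_1^2,\gen(L))=0$, so $L_p$ fails to represent $n_1^2$ for some prime $p$; since for odd $p\nmid dL$ a unimodular quaternary $\z_p$-lattice represents every $p$-adic unit, in particular $n_1^2$, this $p$ must divide $2dL$, and then $L_p$ also fails to represent $n_1^2n_2^2$ (the two differ by a square unit), whence $r(n_1^2n_2^2,\gen(L))=0$ and so $r(n_1^2n_2^2,L)=0=r(n_1^2,L)\prod_{p\nmid 2dL}h_p(dL,\mu_p)$. In every case the defining identity of Definition \ref{defa} holds, so $L$ is strongly $s$-regular, and Theorem \ref{main} finishes the proof. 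I do not expect a genuine obstacle here: the only care needed is the local-density bookkeeping of the middle paragraph (the cancellation at $p\mid 2dL$ and the identity $\left(\frac{dL}{p}\right)^{2\mu_p-t}=\left(\frac{dL}{p}\right)^{t}$) and the short separate handling of the degenerate case, both of which are essentially contained in, or run parallel to, the proofs of Lemma \ref{Min}, Lemma \ref{indi} and Proposition \ref{indis}.
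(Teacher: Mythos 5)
Your proof is correct and follows essentially the same route as the paper: use the Minkowski--Siegel formula (Lemma \ref{Min}) to show that the hypothesis $r(n^2,L)=r(n^2,\gen(L))$ forces $L$ to be strongly $s$-regular, then invoke Theorem \ref{main}. You are in fact somewhat more careful than the paper, which silently skips the degenerate case $r(n_1^2,L)=0$ that you handle by a local argument; this extra step is sound and welcome, but the overall strategy is identical.
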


\section{Strongly $s$-regular diagonal quaternary lattices representing one}
In this section, we will find all strongly $s$-regular diagonal quaternary lattice $L$ with $m_s(L)=1$. To do this, we need the following lemma.

\begin{lem}\label {357}
Let $L=\langle 1,a,b,c\rangle$ be a strongly $s$-regular diagonal quaternary $\z$-lattice with $m_s(L)=1$ and $1\le a\le b\le c$. Then $dL$ is not divisible by at least one prime in $\{3,5,7\}$.
\end{lem}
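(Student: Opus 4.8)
The strategy is a proof by contradiction. Since $L=\langle 1,a,b,c\rangle$ is non-classic integral, $dL=16abc$, so the negation of the conclusion is exactly that $105\mid abc$; in particular each of $3,5,7$ already divides $2dL$, so the smallest prime $q_0$ with $q_0\nmid 2dL$ satisfies $q_0\ge 11$. Because $m_s(L)=1$ we have $r(1,L)\ge 2$, and applying Definition \ref{defa} with $n_1=1$, $n_2=q_0$ gives the lower bound
$$
r(q_0^2,L)=r(1,L)\Bigl(q_0^2+\Bigl(\tfrac{dL}{q_0}\Bigr)q_0+1\Bigr)\ \ge\ 2\,(q_0^2-q_0+1).
$$
More generally, for every $\mu\ge 1$, strong $s$-regularity forces $r(q_0^{2\mu},L)\ge 2\,\dfrac{q_0^{2\mu+1}-1}{q_0+1}$.

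The plan now is to bound $dL$ in terms of $q_0$ and then bound $q_0$. First I claim $c\le q_0^{2}$. Indeed, if $c>q_0^{2}$ then in any representation of $q_0^{2}$ by $L$ the last coordinate vanishes, so $r(q_0^{2},L)=r(q_0^{2},\langle 1,a,b\rangle)$; since $q_0\nmid abc$ we have $q_0\nmid 4ab=d\langle 1,a,b\rangle$, so Lemma \ref{ternary} (with $m=1$, $t=1$) gives $r(q_0^{2},\langle 1,a,b\rangle)\le 12(q_0+2)$, which contradicts $r(q_0^{2},L)\ge 2(q_0^{2}-q_0+1)$ as soon as $q_0\ge 11$. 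Hence $a\le b\le c\le q_0^{2}$ and $dL=16abc\le 16q_0^{6}$. On the other hand, by minimality of $q_0$ every odd prime $p<q_0$ divides $abc$, so $abc$ is at least the product of all odd primes below $q_0$; comparing this product with $abc\le q_0^{6}$ forces $q_0$ to be small (an explicit check gives $q_0\le 29$). Thus $q_0$ lies in a short finite list, and for each such $q_0$ the lattice $L$ satisfies $c\le q_0^{2}$, $105\mid abc$, and $abc$ divisible by the product of the odd primes below $q_0$, leaving only finitely many triples $(a,b,c)$.

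It then suffices to eliminate each of the finitely many remaining candidates $L=\langle 1,a,b,c\rangle$: for each one computes $r(1,L)$ and $r(q_0^{2},L)$ directly — this is the MAPLE step — and checks that the identity $r(q_0^{2},L)=r(1,L)\bigl(q_0^{2}+(\tfrac{dL}{q_0})q_0+1\bigr)$ fails, contradicting strong $s$-regularity.

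I expect the main difficulty to be quantitative rather than conceptual: making the bound on $dL$ small enough that the case list of the preceding paragraph is genuinely manageable. The crude reduction $c\le q_0^{2}$ only controls the largest coefficient, so $dL\le 16q_0^{6}$ is rather large; to trim this one should also exploit the counting bound $r(q_0^{2},\langle 1,a,b\rangle)\le 2(2q_0/\sqrt{a}+1)(2q_0/\sqrt{b}+1)$, which forces $a$ (and then $b$) to be small whenever $c$ is large, and, if needed, the higher-power inequalities $r(q_0^{2\mu},L)\ge 2(q_0^{2\mu+1}-1)/(q_0+1)$ combined with the iterated bound of Lemma \ref{ternary}. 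The degenerate case $a=b=1$ (forcing $105\mid c$) is handled along the same lines: as soon as $c>q_0^{2}$ the quantity $r(q_0^{2},L)=r(q_0^{2},\langle 1,1,1\rangle)$ is a ternary representation number, hence $O(q_0)$ by Lemma \ref{ternary}, far below the required $2(q_0^{2}-q_0+1)$, so $c\le q_0^{2}$ after all. Alternatively, one may simply invoke the bound on $dL$ already implicit in the proof of Theorem \ref{main} and carry out a single (larger) finite verification.
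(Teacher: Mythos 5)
Your proposal is correct and follows essentially the same route as the paper: assume $3\cdot5\cdot7\mid dL$, take the smallest prime $q_0\nmid dL$ (so $q_0\ge 11$), play the lower bound $r(q_0^2,L)\ge 2(q_0^2-q_0+1)$ from strong $s$-regularity against the upper bound $12(q_0+2)$ from restricting to the ternary sublattice $\langle 1,a,b\rangle$ via Lemma \ref{ternary}, use Bertrand--Chebyshev to cap $q_0$ at $29$, and finish the remaining bounded cases by machine check. The only difference is organizational (you run the inequality $\prod_{p<q_0}p\le abc\le q_0^6$ in the contrapositive direction to the paper's $t\ge 10$ case), which does not change the argument.
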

\begin{proof}
Let $L=\langle 1,a,b,c\rangle$ be a strongly $s$-regular diagonal quaternary $\z$-lattice with $m_s(L)=1$ and $1\le a\le b\le c$. Let $p_t$ be the $t$-th smallest odd prime. Suppose that, on the contrary, $p_1p_2\cdots p_{t-1} \mid dL$, but $p_t \nmid dL$ for some $t\ge 4$.

First, assume that $t=4$. Since $11\nmid dL=16abc$, we have
\begin{equation}\label{t=4}
r(11^2,L)=r(1,L)\left(11^2+\left(\frac{dL}{11}\right)11+1\right)\ge 2\cdot 111=222.
\end{equation}
If $c\ge 11^2+1$, then $r(11^2,L)=r(11^2,\langle1,a,b\rangle)\le 12(11+2)=156$ by Lemma \ref{ternary}. This is a contradiction. Hence we have $1\le a\le b\le c\le 121$. For all possible finite cases, one may check that there are no quaternary $\z$-lattices such that the equation \eqref{t=4} holds. The cases when $5\le t \le 9$ can be dealt with similar manner to this.

Finally, assume that $t\ge 10$. Then $p_t^6< p_1p_2\cdots p_{t-1}\le abc \le c^3$ by the Bertrand-Chebyshev Theorem. 
Hence $p_t^2 \le c$ and by Lemma \ref{ternary}, we have
$$
r(p_t^2, L)=r(p_t^2,\langle 1,a,b\rangle)\le 12(p_t+2).
$$
However, since $L$ is strongly $s$-regular and $p_t \nmid dL$, we have
$$
r(p_t^2,L)=r(1,L)\left( p_t^2+\left(\frac{dL}{p_t}\right)p_t+1\right) \ge 2(p_t^2-p_t+1).
$$
Since $p_t\ge 31$, we have $12(p_t+2) < 2(p_t^2-p_t+1)$. This is a contradiction.
\end{proof}

\begin{table}[t]
\caption{Strongly $s$-regular lattices $L$}
\begin{tabular}{|c|l|}\hline
 &\hskip 4.5cm $L$ \\\hline
\multirow{4}{*}{$3\nmid dL \ (18)$}  & $\langle1,1,1,1\rangle$, $\langle1,1,1,2\rangle$, $\langle1,1,1,4\rangle$, $\langle1,1,1,5\rangle$, $\langle1,1,1,8\rangle$,  \\
 &$\langle1,1,2,2\rangle$, $\langle1,1,2,4\rangle$, $\langle1,1,4,4\rangle$, $\langle1,1,4,8\rangle$, $\langle1,2,2,2\rangle$,  \\
&$\langle1,2,2,4\rangle$, $\langle1,2,2,8\rangle$, $\langle1,2,4,4\rangle$, $\langle1,2,8,8\rangle$, $\langle1,4,4,4\rangle$,  \\
    &$\langle1,4,4,8\rangle$, $\langle1,5,5,5\rangle$, $\langle1,8,8,8\rangle$ \\
\hline
   
\multirow{3}{*}{$3\mid dL$, $5\nmid dL \ (14)$}\!\! & $\langle1,1,1,3\rangle$, $\langle1,1,2,3\rangle$, $\langle1,1,2,6\rangle$, $\langle1,1,3,3\rangle$, $\langle1,1,3,9\rangle$,  \\
   &$\langle1,2,2,3\rangle$, $\langle1,2,2,6\rangle$, $\langle1,2,4,6\rangle$, $L_1=\langle\textbf{1,2,6,16}\rangle$, $\langle1,3,3,3\rangle$,\!\\
    &$\langle1,3,3,6\rangle$, $\langle1,3,3,9\rangle$, $\langle1,3,6,6\rangle$, $\langle1,3,9,9\rangle$ \\\hline
 
\multirow{1}{*}{$15\mid dL$, $7\nmid dL\ (2)$} \!\!\!&  $L_2=\langle\textbf{1,1,3,5}\rangle$, $L_3=\langle\textbf{1,2,3,10}\rangle$ \\
\hline                                         
\end{tabular}
\end{table}

\begin{thm}\label{main2}
There are exactly $34$ strongly $s$-regular diagonal quaternary $\z$-lattices $L$ up to isometry such that $m_s(L)=1$, which are listed in Table $1$.
\end{thm}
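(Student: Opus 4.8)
The plan is to carry out an exhaustive classification in three stages. First, since $m_s(L)=1$ forces $L$ to represent $1$ and $L$ is diagonal, we may write $L=\langle 1,a,b,c\rangle$ with $1\le a\le b\le c$ (so in particular $r(1,L)\ge 2$). By Lemma \ref{357}, $dL$ is not divisible by at least one prime in $\{3,5,7\}$, which splits the analysis into the three cases appearing in the left column of Table $1$: $(i)$ $3\nmid dL$; $(ii)$ $3\mid dL$ and $5\nmid dL$; $(iii)$ $15\mid dL$ and $7\nmid dL$. In each case, let $p\in\{3,5,7\}$ be the smallest odd prime not dividing $dL$.

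Second, in each case I would derive an explicit upper bound for $c$, and hence for $dL=16abc$, by pitting the strongly $s$-regular identity against the ternary estimate of Lemma \ref{ternary}, exactly as in the proofs of Theorem \ref{main} and Lemma \ref{357}. For every $\mu\ge 1$, strong $s$-regularity gives
$$
r(p^{2\mu},L)=r(1,L)\sum_{t=0}^{2\mu}\left(\frac{dL}{p}\right)^tp^t\ \ge\ 2\cdot\frac{p^{2\mu+1}-1}{p^2-1},
$$
while, $L$ being diagonal, $r(p^{2\mu},L)=r(p^{2\mu},\langle 1,a,b\rangle)\le 12\,(p+2)(p+1)^{\mu-1}$ whenever $c>p^{2\mu}$, by Lemma \ref{ternary}. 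Taking $\mu$ large enough that the lower bound exceeds the upper bound forces $c\le p^{2\mu}$; the same reasoning applied to the sections $\langle 1,a\rangle$ and $\langle 1,a,b\rangle$ bounds $a$ and $b$ as well. This leaves, in each case, a finite and explicitly computable set of candidates $\langle 1,a,b,c\rangle$, which I would enumerate by computer (MAPLE).

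Third, I would decide for each surviving candidate whether it is genuinely strongly $s$-regular. Candidates of class number $1$ are automatically strongly $s$-regular by Lemma \ref{Min} (equivalently Proposition \ref{indis}), which accounts for the bulk of the $34$ entries. A candidate of larger class number that is to be discarded is eliminated by producing a single integer $n$ with $r(n^2,L)\ne r(n^2,\gen(L))$, a finite computation. The three entries of class number greater than $1$ that nonetheless belong to the list, namely $L_1=\langle 1,2,6,16\rangle$, $L_2=\langle 1,1,3,5\rangle$ and $L_3=\langle 1,2,3,10\rangle$, require a genuine proof of strong $s$-regularity: for $L_1$ and $L_2$ I would check by computer, via Lemma \ref{indi}, that their genera are indistinguishable by squares and invoke Proposition \ref{indis}; for $L_3$, whose genus is \emph{not} indistinguishable by squares, I would instead use the eta-quotient identity of Lemma \ref{120} to obtain an exact formula for $r(n^2,\langle 1,2,3,10\rangle)$ and deduce the required relation as in Proposition \ref{333}.

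The main obstacle is twofold. The discriminant bound one gets for free from Theorem \ref{main} is enormous, so the real work is to make the bounds in the three cases small enough that the MAPLE enumeration terminates in practice and that each ``near miss'' candidate can be ruled out explicitly. And the lattice $\langle 1,2,3,10\rangle$ resists every genus-theoretic shortcut, so one must actually construct and prove the eta-quotient representation of its square-indexed theta coefficients; this is the analytic core of the argument and the reason Lemma \ref{120} and Proposition \ref{333} are needed.
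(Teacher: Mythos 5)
Your overall skeleton is the paper's: reduce to $L=\langle 1,a,b,c\rangle$, invoke Lemma \ref{357} to split into the three cases of Table $1$, bound $c$ by playing the strongly $s$-regular identity $r(p^{2\mu},L)=r(1,L)h_p(dL,\mu)$ against the ternary bound of Lemma \ref{ternary} applied to the section $\langle 1,a,b\rangle$, enumerate the finitely many survivors by computer, settle the class number one lattices by Proposition \ref{indis}, and treat $L_1,L_2,L_3$ separately as in Section $5$. (The paper actually sharpens the bounding step by using the exact values $r(1,L)\in\{2,4,\dots\}$ and the exact possible values of $r(9,L)$ or $r(25,L)$, which is what makes the search range as small as $c\le 9$; your generic bound $12(p+2)(p+1)^{\mu-1}$ would work but leaves a larger enumeration.) However, there are two genuine problems in your step three.

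First, your criterion for discarding a candidate of class number greater than one --- exhibiting an $n$ with $r(n^2,L)\ne r(n^2,\gen(L))$ --- is logically unsound. Equality with the genus average on all squares is \emph{sufficient} for strong $s$-regularity (this is the remark before the Corollary at the end of Section $3$), but it is not necessary: if $L$ is strongly $s$-regular then $r(n_1^2n_2^2,L)-r(n_1^2n_2^2,\gen(L))=\bigl(r(n_1^2,L)-r(n_1^2,\gen(L))\bigr)\prod_{p\nmid 2dL}h_p(dL,\mu_p)$, so $L$ may differ from its genus average on squares and still satisfy the definition. Discarding on that basis could in principle delete a lattice that belongs in the table; you must instead exhibit a specific pair $(n_1,n_2)$ for which the defining identity $r(n_1^2n_2^2,L)=r(n_1^2,L)\prod h_p$ fails, which is what the paper does for $\langle 1,2,3,3\rangle$ and $\langle 1,3,3,18\rangle$. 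Second, your description of the $L_3=\langle 1,2,3,10\rangle$ step is backwards: the paper \emph{proves} that $\gen(L_3)$ \emph{is} indistinguishable by squares, and then applies Proposition \ref{indis}. The eta-quotients of Lemma \ref{120} do not give an exact formula for $r(n^2,L_3)$; they express $\tfrac12\bigl(r(n,L_3)-r(n,L_3')\bigr)$ as $A_1(n)+A_2(n)-4A_3(n)$ and show this vanishes for $n\equiv 1,4\pmod 5$, after which a sublattice recursion relating $r(25n^2,L_3)$ and $r(25n^2,L_3')$ to common quantities closes the induction and yields $r(n^2,L_3)=r(n^2,L_3')$ for all $n$. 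As written, your plan for $L_3$ rests on a premise that is false and on an output Lemma \ref{120} does not provide.
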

\begin{proof}
Note that all diagonal quaternary lattices except $L_1$, $L_2$ and $L_3$ highlighted in boldface in Table $1$ have class number $1$. By Proposition \ref{indis}, they are strongly $s$-regular. There are exactly $3$ strongly $s$-regular diagonal quaternary $\z$-lattice with class number $2$ in Table $1$. The proof of the strongly $s$-regularities of these lattices will be given in Section $5$.

Let $L=\langle 1,a,b,c\rangle$ be a strongly $s$-regular diagonal quaternary $\z$-lattice. By Lemma \ref{357}, the discriminant of $L$, which is $16abc$, is not divisible by at least one prime in $\{3,5,7\}$.

First, assume that $dL$ is not divisible by $3$. Assume that $r(1,L)=2$, that is, $a\ge 2$.
Then we have $r(9,L)=14$ or $26$. If $c>9$, then $r(9,L)=r(9,\langle 1,a,b\rangle)=14$ or $26$. In this case, there does not exist a strongly $s$-regular diagonal lattice $L$ such that $3 \nmid dL$. Hence we have $a\le b\le c \le 9$. For all possible cases, $L$ is strongly $s$-regular if and only if the class number of $L$ is one. Assume that $a=1$ and $b\ge 2$. Then we have $r(9,L)=28$ or $52$.
If $c>9$, then by Lemma \ref{ternary}, we have $r(9,L)\le 5\cdot 4=20$. This is a contradiction. 
Hence we have $2\le b\le c\le 9$. In this case, there are exactly $4$ strongly $s$-regular diagonal lattices with class number $1$. Assume that $a=b=1$. Then we have $c=1,2,4,5$ or $8$.

Next, assume that $dL$ is divisible by $3$, but is not divisible by $5$. Assume that $a\ge 2$. Then we have $r(25,L)=42$ or $62$. This implies that $2 \le a \le  b\le c\le 9$. For all possible cases, one may check that $L$ is isometric to one of
$$
\begin{array}{ll}
&\langle1,2,2,3\rangle,\ \langle1,2,2,6\rangle,\ \langle1,2,3,3\rangle^{\dagger},\ \langle1,2,4,6\rangle,\ L_1=\langle1,2,6,16\rangle^{\dagger},\ \langle1,3,3,3\rangle,\\
&\langle1,3,3,6\rangle,\ \langle1,3,3,9\rangle,\ \langle1,3,3,18\rangle^{\dagger},\ \langle1,3,6,6\rangle\ \text{and}\ \langle1,3,9,9\rangle.
\end{array}
$$
The lattices with dagger mark have class number $2$. By definition of strongly $s$-regularity, the lattices $\langle1,2,3,3\rangle^{\dagger}$ and $\langle1,3,3,18\rangle^{\dagger}$ are not strongly $s$-regular. The strongly $s$-regularity of $L_1$ will be proved in Proposition \ref{111}.
Assume $a=1$. In this case, there are exactly $5$ strongly $s$-regular diagonal lattices with class number $1$.
 
Finally, assume that $15\mid dL$ and $7 \nmid dL$. In this case, $L$ is isometric to one of
$$
L_2=\langle 1,1,3,5 \rangle \quad \text{and}\quad L_3=\langle 1,2,3,10 \rangle.
$$ 
Note that $L_2$ and $L_3$ have class number $2$. The strongly $s$-regularities of $L_2$ and $L_3$
will be proved in Proposition \ref{222} and Proposition \ref{333}, respectively.
\end{proof}

\section{Nontrivial strongly $s$-regular diagonal quaternary lattices}
In this section, we will prove the strongly $s$-regularities of diagonal quaternary lattices $L_1$, $L_2$ and $L_3$ in Table $1$, which are of class number $2$.

\begin{prop}\label{111}
The genus $\gen(L_1)$ is indistinguishable by squares. In particular, the quaternary $\z$-lattice $L_1$ is strongly $s$-regular.
\end{prop}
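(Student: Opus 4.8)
The plan is to reduce the statement to a finite verification by invoking Lemma \ref{indi}. Since $dL_1=16\cdot(1\cdot 2\cdot 6\cdot 16)=2^{10}\cdot 3$, Lemma \ref{indi} tells us it suffices to prove
\[
r(n^2,L_1)=r(n^2,L_1')
\]
for every integer $n$ all of whose prime factors lie in $\{2,3\}$, where $L_1'$ denotes the unique $\z$-lattice in $\gen(L_1)$ other than $L_1$ (the class number being $2$); the ``in particular'' clause is then immediate from Proposition \ref{indis}. The case $n=1$, namely $r(1,L_1)=r(1,L_1')$, is checked by inspection, so the problem splits into the two cases $n=2^a 3^b$ with $a\ge 1$ and $n=3^b$ with $b\ge 1$.

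For the case $a\ge 1$ I would iterate the Watson transformation at $2$. Since $Q_{L_1}=x_1^2+2x_2^2+6x_3^2+16x_4^2$ satisfies $Q_{L_1}\equiv x_1\pmod 2$, one has $\Lambda_2(L_1)=\langle 4,2,6,16\rangle$ and $\lambda_2(L_1)=\langle 1,2,3,8\rangle$; as every representation of an even integer lies automatically in $\Lambda_2(L_1)$, this yields $r(2N,L_1)=r(N,\langle 1,2,3,8\rangle)$ for all $N$, and a second application gives $r(4M,L_1)=r(M,\lambda_2^2(L_1))$ for all $M$, with $\lambda_2^2(L_1)$ a $\z$-lattice of strictly smaller discriminant. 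One then checks that $\lambda_2^2(L_1)$ (or, after iterating $\lambda_2$ a few more times if necessary, its eventual Watson descendant) has class number one, so its representation numbers are genus invariants; since $L_1'$ is everywhere locally isometric to $L_1$, the same transformations carry $L_1'$ into the same genus, and hence $r(2^{2a}3^{2b},L_1)=r(2^{2a}3^{2b},L_1')$ for all $a\ge 1$, $b\ge 0$. The bookkeeping of the finitely many lattices arising here, and the verification of their class numbers, is conveniently carried out with MAPLE, as in the proof of Theorem \ref{main2}.

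The remaining case $r(3^{2b},L_1)=r(3^{2b},L_1')$ is the main obstacle. Here the unimodular Jordan component of $L_1$ at $3$ is $\langle 1,1,2\rangle$, which is \emph{isotropic} over $\q_3$, so Lemma \ref{aniso} is not available at the prime $3$. I would instead establish and use the quaternary analogue of the isotropic descent in Lemma \ref{iso}, a relation of the shape
\[
r(3n,L_1)=r(3n,\Gamma_{3,1}(L_1))+r(3n,\Gamma_{3,2}(L_1))-r(3n,\Lambda_3(L_1)),
\]
and its counterpart for $L_1'$; iterating this reduces $r(3^{2b},L_1)-r(3^{2b},L_1')$ to a bounded number of differences of representation numbers of $\z$-lattices of strictly smaller discriminant, each of which one checks to be class number one, so that every such difference vanishes, together with a direct check of the finitely many base cases. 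An alternative, in the spirit of the treatment of $L_3$, is to identify the cuspidal difference $\theta_{L_1}-\theta_{L_1'}$ at the prime $3$ explicitly as an eta-quotient and read off the vanishing of its coefficients at powers of $3$. In either route, combining this with the previous case and applying Lemma \ref{indi} completes the proof; the crux is making the $3$-adic descent effective — establishing the correct isotropic relation in the quaternary setting and controlling the auxiliary sublattices it produces at the prime $3$.
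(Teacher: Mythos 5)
Your overall strategy---reduce via Lemma \ref{indi} to showing $r(n^2,L_1)=r(n^2,L_1')$ for $n=2^a3^b$, then descend---is legitimate in principle, but the proof has a genuine gap at exactly the point you flag as ``the main obstacle,'' namely the case $n=3^b$. You correctly observe that the unimodular component of $(L_1)_3$ is isotropic, so Lemma \ref{aniso} is unavailable; but the tool you propose to use instead, a quaternary analogue of the relation $r(pn,L)=r(pn,\Gamma_{p,1}(L))+r(pn,\Gamma_{p,2}(L))-r(pn,\Lambda_p(L))$, is stated in the paper only for ternary lattices (it is Proposition 4.1 of \cite{jlo}) and is neither established nor cited for rank $4$ in your argument. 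Moreover, the claim that iterating such a relation lands on class-number-one lattices, and the analogous claim for $\lambda_2^2(L_1)$ ``or its eventual Watson descendant'' in the $a\ge1$ case, are asserted rather than verified. As written, the proof does not close.

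For comparison, the paper's argument is far more elementary and sidesteps all of this. Writing $Q=a^2+2b^2+6c^2+16d^2$, one checks that $Q\equiv 0\pmod 4$ forces $a\equiv 0$ and $b\equiv c\pmod 2$, while $Q\equiv 1\pmod 4$ forces $b\equiv c\pmod 2$; hence every representation of an integer $\equiv 0$ or $1\pmod 4$ lies in an explicit sublattice, giving $r(4n,L_1)=r\bigl(4n,\langle4\rangle\perp\left(\begin{smallmatrix}8&4\\4&8\end{smallmatrix}\right)\perp\langle16\rangle\bigr)$ and $r(4n+1,L_1)=r\bigl(4n+1,\langle1\rangle\perp\left(\begin{smallmatrix}8&4\\4&8\end{smallmatrix}\right)\perp\langle16\rangle\bigr)$. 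The identical computation for $L_1'$ produces the same two auxiliary lattices, so $r(m,L_1)=r(m,L_1')$ for every $m\equiv 0,1\pmod 4$---in particular for every square---and the conclusion follows from Proposition \ref{indis} with no Hecke-operator reduction, no Watson descent, and no analysis at the prime $3$ at all. If you want to salvage your route, you would need to actually prove the rank-$4$ isotropic splitting at $p=3$ (or carry out the eta-quotient alternative you mention); but the congruence argument above is the efficient way through.
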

\begin{proof}
Note that $h(L_1)=2$ and $\gen(L_1)=\{[L_1],[L_1']\}$, where
$$
L_1'=\langle1\rangle \perp \begin{pmatrix}6&2&-2\\2&6&2\\-2&2&8 \end{pmatrix}.
$$
Let $\{x_1,x_2,x_3,x_4\}$ be the basis for $L_1$ whose Gram matrix is given in Table $1$.

Assume that $Q(ax_1+bx_2+cx_3+dx_4)=4n$ for any nonnegative integer $n$. Then $a^2+2b^2+6c^2+16d^2=4n$ and $a\equiv 0 \pmod 2$. This also implies that $b-c\equiv0 \pmod 2$. Hence we have
$$
r(4n,L_1)=r(4n,\z(2x_1)+\z (2x_2)+\z (x_2+x_3)+\z x_4),
$$
which implies that 
$$
r(4n,L_1)=r\left(4n,\langle4\rangle \perp \begin{pmatrix} 8&4\\4&8 \end{pmatrix}\perp\langle16\rangle\right).
$$
Similarly, we have
$$
r(4n,L_1')=r\left(4n,\langle4\rangle \perp \begin{pmatrix} 8&4\\4&8 \end{pmatrix}\perp\langle16\rangle\right).
$$
Therefore, we have 
\begin{equation}\label{l1}
r(4n,L_1)=r(4n,L_1').
\end{equation}

On the other hand, assume that $Q(ax_1+bx_2+cx_3+dx_4)=4n+1$ for any nonnegative integer $n$. Then $a^2+2b^2+6c^2+16d^2=4n+1$ and $b-c\equiv 0 \pmod 2$. Hence we have
$$
r(4n+1,L_1)=r(4n+1,\z x_1+\z (2x_2)+\z (x_2+x_3)+\z x_4),
$$
which implies that
$$
r(4n+1,L_1)=r\left(4n+1,\langle1\rangle \perp \begin{pmatrix} 8&4\\4&8 \end{pmatrix}\perp\langle16\rangle\right).
$$
Similarly, we have
$$
r(4n+1,L_1')=r\left(4n+1,\langle1\rangle \perp \begin{pmatrix} 8&4\\4&8 \end{pmatrix}\perp\langle16\rangle\right).
$$
Therefore, we have 
\begin{equation}\label{l11}
r(4n+1,L_1)=r(4n+1,L_1').
\end{equation}
By \eqref{l1} and \eqref{l11}, the genus $\gen(L_1)$ is indistinguishable by squares and by Proposition \ref{indis}, $L_1$ and $L_1'$ are strongly $s$-regular.
\end{proof}

\begin{prop}\label{222}
The genus $\gen(L_2)$ is indistinguishable by squares. In particular, quaternary $\z$-lattice $L_2$ is strongly $s$-regular.
\end{prop}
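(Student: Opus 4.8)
The plan is to follow the template of the proof of Proposition \ref{111}. I would first record that $h(L_2)=2$ and exhibit the genus explicitly as $\gen(L_2)=\{[L_2],[L_2']\}$, writing down a Gram matrix for the second class $L_2'$. It then suffices to establish the first assertion, that $\gen(L_2)$ is indistinguishable by squares (equivalently, since the genus has two classes, that $r(m^2,L_2)=r(m^2,L_2')$ for every integer $m$), because the strong $s$-regularity of $L_2$ follows from this by Proposition \ref{indis}.

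Since $dL_2=2^4\cdot 3\cdot 5$, Lemma \ref{indi} reduces the problem to the case in which every prime factor of $m$ lies in $\{2,3,5\}$. For such $m$ I would argue exactly as for $L_1$: examine the congruence $Q(v)=m^2$ modulo a suitable divisor of $16\cdot 9\cdot 25$ and show that, in each admissible residue class that a perfect square can occupy, the coordinates of a representing vector are forced into a fixed index-$p^k$ sublattice $K$ of $L_2$ whose rescaling is an explicit orthogonal sum of lower-rank lattices; the same analysis applied to $L_2'$ produces (in the favorable cases, literally) the same sublattice, and since $L_2$ and $L_2'$ agree at every finite prime the resulting sublattices always lie in a single genus, so whenever that genus has class number one they are isometric. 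One then reads off $r(m^2,L_2)=r(m^2,K)=r(m^2,K')=r(m^2,L_2')$. Iterating the $2$-adic step absorbs the power of $2$ in $m$, and analogous congruence steps at $3$ and at $5$ — invoking the Watson transformations $\lambda_3,\lambda_5$, and the splitting Lemmas \ref{aniso} and \ref{iso} applied to the ternary sublattices that occur whenever one of them fails to have class number one — absorb the powers of $3$ and of $5$.

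The main obstacle is precisely this passage to the odd primes $3$ and $5$, together with the fact that even the $2$-adic step is less clean than for $L_1$: a square can be $\equiv 1 \pmod 3$ (or $\pmod 5$) without forcing any coordinate of a representing vector to vanish modulo that prime, so the single sublattice of the mod-$4$ argument in Proposition \ref{111} is replaced by a small family indexed by residue classes, and one must check case by case that each auxiliary binary or ternary sublattice that arises has class number one, or else reduce it by a further Watson transformation. I expect the conceptual content to be modest and the real work to be this finite — and somewhat tedious — case bookkeeping.
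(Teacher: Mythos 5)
Your reduction via Lemma \ref{indi} and your plan for the prime $3$ are reasonable, and the outer shape of the argument (two classes, exhibit $L_2'$, compare counts on common sublattices, conclude by Proposition \ref{indis}) matches the paper. But the prime-by-prime ``absorption'' at $2$ and at $5$ is a genuine gap, not mere bookkeeping. At $p=2$ the lattice $L_2=\langle 1,1,3,5\rangle$ is odd unimodular, so representations of $4n$ are not forced into any sublattice isometric to a rescaling of a smaller lattice: already $4=2^2=1^2+3\cdot 1^2$, and the admissible parity patterns for $a^2+b^2+3c^2+5d^2\equiv 0\pmod 4$ fill out four distinct cosets whose union is not of the form $2K$ for any $K$. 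There is no analogue here of the mod-$4$ step of Proposition \ref{111}, so the power of $2$ cannot be peeled off this way. At $p=5$ the residue analysis of $a^2+b^2+3c^2\equiv\pm 1\pmod 5$ produces a large family of sign-independent patterns; tellingly, when the paper is actually forced to confront the classes $1,4\pmod 5$ for $L_3$ in Proposition \ref{333}, it abandons congruence sublattices for that part and resorts to eta-quotients. Note also that Lemmas \ref{aniso} and \ref{iso} are not directly available as a fallback for $L_2$ at $3$ or $5$: the unimodular Jordan components there have rank $3$, hence are isotropic, and Lemma \ref{iso} is stated only for ternary lattices.

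The idea you are missing is that no work at $2$ or $5$ is needed at all. The paper runs the mod-$3$ sublattice argument on the entire arithmetic progression $3n+1$, not merely on squares, obtaining $r(3n+1,L_2)=2\,r(3n+1,K)=r(3n+1,L_2')$ for every $n\ge 0$ with the \emph{same} auxiliary lattice $K=\langle 1,3\rangle\perp\left(\begin{smallmatrix}6&3\\3&9\end{smallmatrix}\right)$ arising from both classes (one needs this literal identification, not just that the sublattices share a genus). Since every square prime to $3$ is $\equiv 1\pmod 3$, this single step already covers all $m^2$ with $3\nmid m$, whatever powers of $2$ and $5$ divide $m$. For $3\mid m$ one writes $m^2=9n^2$ and proves the recursion $r(9n^2,L)=4\,r(9n^2,T)-3\,r(n^2,L)$ for a common lattice $T$ and for both $L=L_2$ and $L=L_2'$; induction on $\ord_3(m)$ then finishes. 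Restructure your argument around the progression $1\pmod 3$ rather than around the individual primes dividing $dL_2$.
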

\begin{proof}
Note that $h(L_2)=2$ and $\gen(L_2)=\{[L_2], [L_2']\}$. Here,
$$
L_2'=\langle1,1\rangle \perp \begin{pmatrix}2&1\\1&8 \end{pmatrix}.
$$
Let $\{x_1,x_2,x_3,x_4\}$ be the basis for $L_2$ whose Gram matrix is given in Table $1$. 

Assume that $Q(ax_1+bx_2+cx_3+dx_4)=3n+1$ for any nonnegative integer $n$. Then $a^2+b^2+3c^2+5d^2=3n+1$ and by a direct computation, we have
$$
(a,b,c)\equiv (0,\pm1,0), (\pm1,0,0) \ \text{or} \ (\pm1,\pm1,\pm1) \pmod 3.
$$
Hence we may see that
$$
\begin{array}{ll}
r(3n+1,L_2)=\displaystyle \frac12(\!\!\!\!&r(3n+1,\z x_1+\z(3x_2)+\z x_3+\z(x_2+x_4))\\
                                \!\!\!\!&r(3n+1,\z x_1+\z(3x_2)+\z x_3+\z(-x_2+x_4))\\
                                \!\!\!\!&r(3n+1,\z (3x_1)+\z(x_2+\z x_3+\z(x_1+x_4))\\
                                \!\!\!\!&r(3n+1,\z (3x_1)+\z x_2+\z x_3+\z(-x_2+x_4))\ ).
\end{array}
$$
This implies that
$$
r(3n+1,L_2)=\frac12(4 r(3n+1,K))=2r(3n+1,K).
$$
Here, $K=\langle1,3\rangle \perp \begin{pmatrix}6&3\\3&9\end{pmatrix}$.
Similarly, we also have $r(3n+1,L_2')=2r(3n+1,K)$. Therefore, we have
\begin{equation}\label{eq1}
r(3n+1,L_2)=r(3n+1,L_2'),
\end{equation}
for any nonnegative integer $n$. 

On the other hand, assume that $Q(ax_1+bx_2+cx_3+dx_4)=(3n)^2=9n^2$ for any nonnegative integer $n$.
Then $a\equiv 0\pmod 3$ or $b\equiv 0\pmod 3$. Hence we have
$$
\begin{array}{ll}
\!\!r(9n^2, L_2)\!\!\!\!\!&=\!r(9n^2,\z (3x_1)\!+\z x_2\!+\z x_3+\z x_4)\!+\!r(9n^2,\z x_1\!+\z (3x_2)\!+\z x_3\!+\z x_4)\\
                &\hspace{0.4cm}-r(9n^2,\z (3x_1)+\z (3x_2)+\z x_3+\z x_4),
\end{array}
$$
which implies that 
$$
\begin{array}{ll}
r(9n^2, L_2)\!\!\!&=2r(9n^2,\langle1,3,5,9\rangle)-r(9n^2,\langle3,5,9,9 \rangle)\\
          &=2r(9n^2,\langle1,3,5,9\rangle)-r(n^2,L_2).
\end{array}
$$
One may show that (see, for example, Lemma \ref{iso})
$$
\begin{array}{ll}
r(9n^2,\langle1,3,5,9\rangle)\!\!\!&=2r(9n^2,T)-r(9n^2,\langle3,9,9,45\rangle)\\
                             &=2r(9n^2,T)-r(n^2,L_2).   
\end{array}
$$
Here, $T=\langle3\rangle \perp \begin{pmatrix}6&3\\3&9 \end{pmatrix}\perp\langle9\rangle  $. Therefore we have
\begin{equation}\label{eq2}
r(9n^2,L_2)=4r(9n^2,T)-3r(n^2,L_2).
\end{equation}
Similarly, we also have 
\begin{equation}\label{eq3}
r(9n^2,L_2')=4r(9n^2,T)-3r(n^2,L_2').
\end{equation}
By \eqref{eq1}, \eqref{eq2} and \eqref{eq3}, we see that
$$
r(n^2,L_2)=r(n^2,L_2'),
$$
for any nonnegative integer $n$. Therefore, the genus $\gen(L_2)$ is indistinguishable by squares and $L_2$ and $L_2'$ are strongly $s$-regular.
\end{proof}

From now on, we prove the strongly $s$-regularity of the lattice $L_3$ in Table $1$. To deal with this, we need some results from the theory of modular forms. For some relations between representations of quadratic forms and modular forms, see Chapter $10$ of \cite{wp}.

Let $N$ be a positive integer and let $\Gamma_0(N)$ be the Hecke congruence subgroup of $\text{SL}_2(\z)$. We denote the space of cusp forms weight $k$ with character $\chi$ for $\Gamma_0(N)$ by $S_k(N,\chi)$.

We define the {\it Dedekind's eta-function} $\eta(z)$ by
$$
\eta(z)=q^{1/24}\prod_{n=1}^{\infty}(1-q^{n}) \quad (q=e^{2\pi iz}).
$$
An {\it eta-quotient} $f(z)$ is defined to be a finite product of the form
$$
f(z)=\prod_{\delta\mid N}\eta(\delta z)^{r_{\delta}},
$$
where $N$ is a positive integer and each $r_{\delta}$ is an integer.

\begin{thm}[\cite{ne1,ne2}]\label{modularform}
If $f(z)=\prod_{\delta\mid N}\eta(\delta z)^{r_{\delta}}$ is an eta-quotient with $k=\frac{1}{2}\sum_{\delta\mid N}r_{\delta}\in\mathbb{Z}$, with additional properties that
$$\sum_{\delta\mid N}\delta r_{\delta}\equiv 0\pmod{24}$$
and
$$\sum_{\delta\mid N}\frac{N}{\delta}r_{\delta}\equiv 0\pmod{24},$$
then $f(z)$ satisfies
$$f\left(\frac{az+b}{cz+d}\right)=\chi(d)(cz+d)^{k}f(z)$$
for every $\left(\begin{smallmatrix}a & b \\ c & d\end{smallmatrix}\right)\in\Gamma_{0}(N)$. Here the character $\chi$ is defined by $\chi(d):=\left(\frac{(-1)^{k}s}{d}\right)$, where $s:=\prod_{\delta\mid N}\delta^{|r_{\delta}|}$.
\end{thm}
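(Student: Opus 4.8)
My plan is to deduce Theorem~\ref{modularform}, the classical eta-quotient criterion of Newman and Ligozat, from the transformation behaviour of a single Dedekind eta-factor under $\Gamma_0(N)$. First I would record that $\eta$ is holomorphic and nowhere vanishing on the upper half-plane $\mathbb H$, so that $f(z)=\prod_{\delta\mid N}\eta(\delta z)^{r_\delta}$ is a well-defined holomorphic nonvanishing function on $\mathbb H$ and the asserted identity is an identity between such functions; hence it suffices to keep track of automorphy factors and multiplier systems. The one external input is the standard transformation law for $\gamma=\left(\begin{smallmatrix}a&b\\c&d\end{smallmatrix}\right)\in\mathrm{SL}_2(\z)$ with $c>0$,
$$\eta(\gamma z)=\varepsilon(\gamma)\,(cz+d)^{1/2}\eta(z),$$
where $\varepsilon(\gamma)$ is an explicit $24$-th root of unity expressible through the Dedekind sum $s(d,c)$, with the principal branch of the square root.

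Next I would conjugate. Fix $\gamma=\left(\begin{smallmatrix}a&b\\c&d\end{smallmatrix}\right)\in\Gamma_0(N)$ and a divisor $\delta\mid N$; since $N\mid c$ we have $\delta\mid c$, so $\gamma_\delta:=\left(\begin{smallmatrix}a&b\delta\\ c/\delta&d\end{smallmatrix}\right)\in\mathrm{SL}_2(\z)$, and a direct check gives $\delta\cdot\gamma z=\gamma_\delta(\delta z)$. Applying the eta transformation to $\gamma_\delta$ acting on $\delta z$ yields
$$\eta(\delta\cdot\gamma z)=\varepsilon(\gamma_\delta)\bigl((c/\delta)(\delta z)+d\bigr)^{1/2}\eta(\delta z)=\varepsilon(\gamma_\delta)\,(cz+d)^{1/2}\eta(\delta z).$$
Raising to the $r_\delta$-th power and multiplying over $\delta\mid N$, the automorphy factors collapse to $(cz+d)^{\frac12\sum_\delta r_\delta}=(cz+d)^{k}$, so that
$$f(\gamma z)=\Bigl(\prod_{\delta\mid N}\varepsilon(\gamma_\delta)^{r_\delta}\Bigr)(cz+d)^{k}f(z),$$
the weight $k$ appearing automatically from the hypothesis $k=\tfrac12\sum_\delta r_\delta\in\z$. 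It then remains to show that the scalar $\prod_{\delta\mid N}\varepsilon(\gamma_\delta)^{r_\delta}$ equals $\chi(d)=\left(\frac{(-1)^k s}{d}\right)$ with $s=\prod_{\delta\mid N}\delta^{|r_\delta|}$.

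This last identification is the main obstacle. Substituting the Dedekind-sum formula for each $\varepsilon(\gamma_\delta)$ turns the scalar into $\exp(\pi i\,(\ast))$ where $(\ast)$ is built from $\frac{a+d}{12}\sum_\delta\frac{\delta r_\delta}{c}$ and $\sum_\delta r_\delta\, s(d,\,c/\delta)$. The Dedekind-sum reciprocity law $s(h,k)+s(k,h)=-\tfrac14+\tfrac1{12}\bigl(\tfrac hk+\tfrac kh+\tfrac1{hk}\bigr)$, together with the elementary sign and periodicity properties of Dedekind sums, rewrites these sums in closed rational form; the two congruence hypotheses $\sum_\delta\delta r_\delta\equiv 0\pmod{24}$ and $\sum_\delta (N/\delta)r_\delta\equiv 0\pmod{24}$ are exactly what is needed to kill the offending rational contributions modulo the integers, leaving an exponential of Dedekind data which, via quadratic reciprocity and the classical formulas relating such exponentials to Jacobi symbols, assembles into $\left(\frac{(-1)^k s}{d}\right)$. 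The bookkeeping is elementary but delicate; it is carried out in full in \cite{ne1,ne2} (and in the exposition referenced in \cite{wp}), and since this criterion is used here purely as a tool I would either reproduce that computation or simply invoke it.
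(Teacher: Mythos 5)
The paper gives no proof of this theorem at all --- it is quoted verbatim from Newman (and Ligozat) with only the citation \cite{ne1,ne2} as justification, and your sketch is precisely the standard argument behind those references: conjugate each factor via $\gamma_\delta=\left(\begin{smallmatrix}a&b\delta\\ c/\delta&d\end{smallmatrix}\right)$, collapse the automorphy factors to $(cz+d)^k$, and reduce the multiplier to $\left(\frac{(-1)^k s}{d}\right)$ using Dedekind-sum reciprocity together with the two congruences mod $24$. Since you correctly identify the skeleton and defer the same delicate bookkeeping to the same sources the paper cites, your proposal matches the paper's treatment.
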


\begin{thm}[\cite{li}]\label{cusp}
Let $c,d$ and $N$ be positive integers with $d\mid N$ and $\gcd{(c,d)}=1$. If $f(z)=\prod_{\delta\mid N}\eta(\delta z)^{r_{\delta}}$ is an eta-quotient, then the order of vanishing of $f(z)$ at the cusp $c/d$ is
$$\frac{N}{24}\sum_{\delta\mid N}\frac{\gcd{(d,\delta)}^{2}r_{\delta}}{\gcd{(d,N/d)}d\delta}.$$
\end{thm}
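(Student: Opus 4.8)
The plan is to compute $\operatorname{ord}_{c/d}(f)$ one eta factor at a time, using the classical transformation law of the Dedekind eta function under $\text{SL}_2(\z)$. First I would fix $\gamma\in\text{SL}_2(\z)$ with first column $(c,d)^{t}$, say $\gamma=\left(\begin{smallmatrix}c & * \\ d & *\end{smallmatrix}\right)$, so that $\gamma(\infty)=c/d$. Under the hypotheses of Theorem \ref{modularform}, the order of vanishing of $f$ at the cusp $c/d$ is the smallest exponent occurring in the $q_h$-expansion of $f|_k\gamma$, where $q_h=e^{2\pi i z/h}$ and $h=N/\bigl(d\gcd(d,N/d)\bigr)$ is the width of the cusp $c/d$ for $\Gamma_0(N)$. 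Since this order is additive over products, it suffices to determine the contribution of each factor $\eta(\delta z)^{r_\delta}$, $\delta\mid N$, and sum.

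Next, the crux of the argument is a matrix factorization. Evaluating the factor $\eta(\delta z)$ at $\gamma z$ produces $\eta(\delta\gamma z)$, where $\delta\gamma z$ is the Möbius action on $z$ of $M_\delta:=\left(\begin{smallmatrix}\delta & 0\\ 0 & 1\end{smallmatrix}\right)\gamma=\left(\begin{smallmatrix}\delta c & *\\ d & *\end{smallmatrix}\right)$, an integral matrix of determinant $\delta$. By the Hermite normal form (left multiplication by $\text{SL}_2(\z)$) we may write $M_\delta=\gamma'\left(\begin{smallmatrix}a_\delta & b_\delta\\ 0 & e_\delta\end{smallmatrix}\right)$ with $\gamma'=\left(\begin{smallmatrix}a' & b'\\ c' & d'\end{smallmatrix}\right)\in\text{SL}_2(\z)$, $a_\delta e_\delta=\delta$ and $a_\delta,e_\delta>0$. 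Comparing first columns gives $(\delta c,d)^{t}=a_\delta\cdot(a',c')^{t}$, and since $(a',c')$ is primitive this forces $a_\delta=\gcd(\delta c,d)$; using $\gcd(c,d)=1$ — exactly the hypothesis on $c,d$ — we obtain $a_\delta=\gcd(\delta,d)$, hence $e_\delta=\delta/\gcd(\delta,d)$. The eta transformation law then gives, for a suitable $24$-th root of unity $\nu(\gamma')$,
$$
\eta(\delta\gamma z)=\eta\!\left(\gamma'\cdot\tfrac{a_\delta z+b_\delta}{e_\delta}\right)=\nu(\gamma')\left(c'\cdot\tfrac{a_\delta z+b_\delta}{e_\delta}+d'\right)^{1/2}\eta\!\left(\tfrac{a_\delta z+b_\delta}{e_\delta}\right).
$$
Since $\eta(w)=e^{2\pi i w/24}\prod_{n\ge1}(1-e^{2\pi i nw})$ and $\prod_{n\ge1}(1-e^{2\pi i nw})\to1$ as $\operatorname{Im}(w)\to\infty$, the factor $\eta\!\left(\tfrac{a_\delta z+b_\delta}{e_\delta}\right)$ has leading term a nonzero constant times $q^{a_\delta/(24e_\delta)}$, where $q=e^{2\pi i z}$.

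To assemble the pieces, note that $f|_k\gamma$ is a holomorphic function of period $h$, so the algebraic factors $\left(c'\cdot\tfrac{a_\delta z+b_\delta}{e_\delta}+d'\right)^{1/2}$ from the various $\gamma'$, together with the weight-$k$ automorphy factor of $\gamma$ in the slash operator, must multiply out to a nonzero constant and therefore cannot affect the order. Hence the factor $\eta(\delta z)^{r_\delta}$ contributes $r_\delta\,h\cdot\tfrac{a_\delta}{24e_\delta}=\dfrac{N\,r_\delta\gcd(\delta,d)^2}{24\,d\,\gcd(d,N/d)\,\delta}$ to $\operatorname{ord}_{c/d}(f)$, and summing over $\delta\mid N$ yields precisely $\dfrac{N}{24}\sum_{\delta\mid N}\dfrac{\gcd(d,\delta)^2 r_\delta}{\gcd(d,N/d)\,d\,\delta}$.

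I expect the only real difficulty to be bookkeeping: pinning down the width normalization $h$ exactly, and arguing cleanly that the individually unbounded automorphy factors (together with the slash factor) collapse to a constant. One can sidestep the appeal to modularity by instead tracking every linear-in-$z$ factor explicitly and using $\sum_{\delta\mid N}r_\delta=2k$ to see that they cancel against the slash factor, but invoking Theorem \ref{modularform} is shorter. This is Ligozat's classical formula, and the full details may be found in \cite{li}.
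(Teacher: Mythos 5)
The paper does not prove this statement at all: it is quoted as Ligozat's theorem with the proof delegated entirely to \cite{li}. Your sketch is the standard derivation of that formula, and it is essentially correct: the Hermite factorization $\left(\begin{smallmatrix}\delta&0\\0&1\end{smallmatrix}\right)\gamma=\gamma'\left(\begin{smallmatrix}a_\delta&b_\delta\\0&e_\delta\end{smallmatrix}\right)$ with $a_\delta=\gcd(\delta c,d)=\gcd(\delta,d)$ (using $\gcd(c,d)=1$), the leading term $q^{a_\delta/(24e_\delta)}=q^{\gcd(\delta,d)^2/(24\delta)}$ of $\eta\bigl(\tfrac{a_\delta z+b_\delta}{e_\delta}\bigr)$, and the width $h=N/\bigl(d\gcd(d,N/d)\bigr)$ combine exactly to the stated contribution $\tfrac{N\gcd(\delta,d)^2 r_\delta}{24\,d\gcd(d,N/d)\,\delta}$ per factor. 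The one step you should state more carefully is the claim that the automorphy factors ``multiply out to a nonzero constant'': the product $j(\gamma,z)^{-k}\prod_\delta\bigl(c'_\delta w_\delta+d'_\delta\bigr)^{r_\delta/2}$ is in general only a rational function of $z$ whose degree cancels because $\sum_\delta r_\delta=2k$ (the individual $\eta(w_\delta)$ need not be $h$-periodic, so this quotient need not be literally constant). What you actually need, and what is true, is that this rational function tends to a nonzero constant as $\operatorname{Im}(z)\to\infty$; since $f|_k\gamma$ has a genuine $q_h$-expansion by Theorem \ref{modularform}, comparing leading asymptotics then pins down the lowest exponent, which is all the formula asserts. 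With that adjustment your argument is complete, and it supplies a proof where the paper offers only a citation.
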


\begin{lem}\label{120}
The eta-quotients
$$
\begin{array}{ll}
\displaystyle F_{120,1}(z)\!\!\!&\displaystyle:=\frac{\eta(2z)^2\eta(15z)^3}{\eta(z)}=q^2+q^3+q^5+q^8+q^{12}-2q^{17}-3q^{18}+\cdots,\\[0.35cm]

\displaystyle F_{120,2}(z)\!\!\!&\displaystyle:=\frac{\eta(2z)\eta(10z)^3\eta(30z)^2}{\eta(5z)\eta(15z)}=q^3-q^5-q^7+q^8-q^{10}-q^{12}+q^{15}+\cdots,\\[0.35cm]

\displaystyle F_{120,3}(z)\!\!\!&\displaystyle:=\frac{\eta(2z)^2\eta(5z)\eta(60z)^3}{\eta(z)\eta(20z)}=q^7+q^8+q^{10}-q^{12}-q^{15}-2q^{18}-q^{20}+\cdots,
\end{array}
$$
are in $S_2\left(120,\left(\frac{60}{\cdot}\right)\right)$. Furthermore, if $F_{120,i}(z)=\sum_{n=1}^{\infty}A_{i}(n)q^n$ for each $i=1,2,3$, then we have
$$
\begin{array}{ll}
A_1(n)&=\displaystyle \frac14\sum_{\substack{a,b\in \z\\a^2+15b^2=8n}}\left(\frac4a\right)\left(\frac{-4}{b}\right)bq^n, \\[0.8cm]

A_2(n)&=\displaystyle \frac1{16}\sum_{\substack{a,b,c,d\in \z\\2a^2+10b^2+15c^2+45d^2=24n}}\left(\frac{12}{ab}\right)\left(\frac{4}{cd}\right)q^n,\\[0.8cm]

A_3(n)&=\displaystyle \frac14\sum_{\substack{a,b\in \z, c\in \n, 3\nmid c\\3a^2+5b^2+160c=24n}}\left(\frac4a\right)\left(\frac{12}{b}\right)\left(\sum_{d\mid n}\left(\frac{d}{3}\right)\right)bq^n.
\end{array}
$$
In particular, if $n\equiv 1$ or $4\pmod 5$, then $A_{i}(n)=0$ for each $i=1,2,3$.
\end{lem}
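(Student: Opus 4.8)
### Proof Proposal for Lemma \ref{120}

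The plan is to verify the three claimed facts in sequence: (i) that each $F_{120,i}$ lies in $S_2\bigl(120,\bigl(\tfrac{60}{\cdot}\bigr)\bigr)$; (ii) that the Fourier coefficients $A_i(n)$ admit the stated representation-number formulas; and (iii) that $A_i(n)=0$ when $n\equiv 1,4\pmod 5$.

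For (i), I would apply Theorem \ref{modularform} and Theorem \ref{cusp} directly. For each eta-quotient, with $N=120$, one checks that $k=\tfrac12\sum_{\delta\mid N}r_\delta = 2$, that $\sum_{\delta\mid N}\delta r_\delta\equiv 0\pmod{24}$ and $\sum_{\delta\mid N}\tfrac{N}{\delta}r_\delta\equiv 0\pmod{24}$; these are short arithmetic checks from the explicit exponent vectors (e.g.\ for $F_{120,1}$, $r_1=-1$, $r_2=2$, $r_{15}=3$). Theorem \ref{modularform} then gives modularity on $\Gamma_0(120)$ with the character $\chi(d)=\bigl(\tfrac{(-1)^2 s}{d}\bigr)$, and one computes $s=\prod\delta^{|r_\delta|}$ and checks $\chi=\bigl(\tfrac{60}{\cdot}\bigr)$ in each case. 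Holomorphy (indeed vanishing) at every cusp $c/d$ with $d\mid 120$ follows from Theorem \ref{cusp}: the order of vanishing $\tfrac{N}{24}\sum_{\delta\mid N}\tfrac{\gcd(d,\delta)^2 r_\delta}{\gcd(d,N/d)d\delta}$ must be computed to be positive for every divisor $d$ of $120$, which shows each $F_{120,i}$ is a genuine cusp form; this is a finite (if tedious) table of divisors. One then notes $\dim S_2\bigl(120,\bigl(\tfrac{60}{\cdot}\bigr)\bigr)$ is finite and the three forms are linearly independent by inspection of leading coefficients.

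For (ii), the strategy is to recognize each right-hand side as a known weight-$2$ theta-type series (a twisted theta series attached to a binary or quaternary form, possibly multiplied by an Eisenstein-type divisor sum in the $A_3$ case), argue it lies in the same space $S_2\bigl(120,\bigl(\tfrac{60}{\cdot}\bigr)\bigr)$ via the standard theory of theta series with harmonic/character weights (Shimura, Schoeneberg), and then invoke a Sturm-type bound: two cusp forms in this space agreeing on sufficiently many initial Fourier coefficients are equal. Thus it suffices to match $A_i(n)$ against the representation sum for $n$ up to the Sturm bound for $\Gamma_0(120)$ in weight $2$, which is where the paper's mention of MAPLE enters. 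The main obstacle is item (ii): one must correctly identify the right-hand sums as elements of the correct modular space — in particular controlling the level, character, and weight of the twisted theta series $\sum \bigl(\tfrac4a\bigr)\bigl(\tfrac{-4}{b}\bigr)b\,q^{(a^2+15b^2)/8}$ and its analogues, including the subtle point that the shift by $160c$ with the divisor-sum factor in $A_3$ really produces a cusp form and not merely a mixed Eisenstein/cuspidal object — and then pushing the finite coefficient verification far enough to meet the Sturm bound.

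Finally, (iii) follows once the representation formulas in (ii) are established: if $n\equiv 1$ or $4\pmod 5$, then $8n$, $24n$, respectively, run through residues mod $5$ forcing the relevant quadratic form value to be a nonzero square or nonresidue pattern that is incompatible with the congruence constraints — concretely, $a^2+15b^2\equiv 8n\pmod 5$ becomes $a^2\equiv 3n\pmod 5$ which has no solution when $3n$ is a nonresidue, i.e.\ when $n\equiv 1,4\pmod 5$, so every term in $A_1(n)$ vanishes; the analogous elementary congruence analysis mod $5$ disposes of $A_2$ and $A_3$. Equivalently, one may observe directly from the eta-product expansions that all three forms are supported on exponents $n\not\equiv 1,4\pmod 5$ because of the $\eta(5z)$, $\eta(15z)$ (resp.\ $\eta(10z),\eta(30z),\eta(20z),\eta(60z)$) factors, whose contribution pins down $n\bmod 5$; I expect the congruence bookkeeping here to be routine once the setup of (i) and (ii) is in place.
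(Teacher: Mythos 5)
Your part (i) matches the paper's proof (apply Theorems \ref{modularform} and \ref{cusp} and compute orders at the finitely many cusps), and your mod-$5$ residue analysis in part (iii) is exactly the observation the paper makes once the formulas are available ($a^2\equiv 3n$ or $2n\pmod 5$ has no solution for $n\equiv 1,4\pmod 5$). The divergence is in part (ii), and it is substantive. The paper does not place the right-hand sides in $S_2\left(120,\left(\frac{60}{\cdot}\right)\right)$ and compare coefficients up to a Sturm bound; it proves the formulas for $A_i(n)$ as exact identities by factoring each $F_{120,i}$ into elementary eta-quotients with known closed-form $q$-expansions, namely $\eta(z)=\frac12\sum_{n}\left(\frac{12}{n}\right)q^{n^2/24}$, $\eta(2z)^2/\eta(z)=\frac12\sum_{n}\left(\frac{4}{n}\right)q^{n^2/8}$, $\eta(z)^3=\frac12\sum_{n}\left(\frac{-4}{n}\right)nq^{n^2/8}$, and K\"ohler's $\eta(3z)^3/\eta(z)=\sum_{3\nmid n}\bigl(\sum_{d\mid n}\left(\frac{d}{3}\right)\bigr)q^{n/3}$. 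Writing $F_{120,1}=\frac{\eta(2z)^2}{\eta(z)}\cdot\eta(15z)^3$, $F_{120,2}=\eta(2z)\,\eta(10z)\cdot\frac{\eta(10z)^2}{\eta(5z)}\cdot\frac{\eta(30z)^2}{\eta(15z)}$ and $F_{120,3}=\frac{\eta(2z)^2}{\eta(z)}\cdot\eta(5z)\cdot\frac{\eta(60z)^3}{\eta(20z)}$ and multiplying the series gives all three formulas directly, with no Sturm bound and no computer verification. (The Sturm-bound-plus-MAPLE step you anticipate belongs to Proposition \ref{333}, where $\phi(z)$ is identified with $F_{120,1}+F_{120,2}-4F_{120,3}$, not to this lemma.)

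The gap in your version is precisely the step you flag as the main obstacle. For $A_1$ and $A_2$ one could plausibly push through the modularity of the twisted/harmonic theta series, but the $A_3$ sum is not a theta series: it is the coefficient sequence of a \emph{product} of a harmonic binary theta series with the weight-one series $\sum_{3\nmid c}\bigl(\sum_{d\mid c}\left(\frac{d}{3}\right)\bigr)q^{20c/3}$, and the only clean way to certify that this product is holomorphic of level exactly $120$ with character $\left(\frac{60}{\cdot}\right)$ is to recognize it as the eta-quotient $F_{120,3}$ itself --- which is circular. Once you import the single-variable eta-identities needed to make that identification rigorous, the exact product computation already yields the formula and the Sturm comparison is superfluous. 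Your alternative remark for (iii) --- reading the support mod $5$ off the eta factors --- likewise only works because $\eta(2z)^2/\eta(z)$ is lacunary with support on $\{a^2/8\}$, i.e., it again presupposes the same identities; it is not visible from the factors $\eta(5z),\eta(15z),\dots$ alone.
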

\begin{proof}
By Theorem \ref{modularform} and \ref{cusp}, we may easily check that $F_{120,i}(z)\in S_2\left(120,\left(\frac{60}{\cdot}\right)\right)$ is the weight 2 cusp form for each $i=1,2,3$. It is well known (see, for example, Chapter $5$ of \cite{cs}) that
\begin{equation}\label{eta}
\begin{array}{ll}
\displaystyle \eta(z)&=\displaystyle\frac12\sum_{n\in \z}\left(\frac{12}{n}\right)q^{n^2/24},\quad \frac{\eta(2z)^2}{\eta(z)}=\frac12\sum_{n\in \z}\left(\frac{4}{n}\right)q^{n^2/8},\\

\displaystyle\eta(z)^3&=\displaystyle\frac12\sum_{n\in \z}\left(\frac{-4}{n}\right)nq^{n^2/8}.
\end{array}
\end{equation}
By Example 11.4 of \cite{kh}, we see that
\begin{equation}\label{spe}
\frac{\eta(3z)^3}{\eta(z)}=\sum_{n\in\n, 3\nmid n}\left(\sum_{d\mid n}\left(\frac{d}{3}\right) \right)q^{n/3}.
\end{equation}
Since
$$
\begin{array}{ll}
F_{120,1}(z)&=\displaystyle\frac{\eta(2z)^2}{\eta(z)}\cdot \eta(15z)^3, \quad F_{120,2}(z)=\eta(2z)\cdot\eta(10z)\cdot\frac{\eta(10z)^2}{\eta(5z)}\cdot\frac{\eta(30z)^2}{\eta(15z)},\\[0.35cm]

F_{120,3}(z)&=\displaystyle\frac{\eta(2z)^2}{\eta(z)}\cdot\eta(5z)\cdot\frac{\eta(60z)^3}{\eta(20z)},
\end{array}
$$
by using \eqref{eta} and \eqref{spe}, we have the formulas for $A_1$, $A_2$ and $A_3$. Then it is obvious that if $n\equiv 1$ or $4\pmod 5$, then $A_{i}(n)=0$ for each $i=1,2,3$.
\end{proof}

\begin{prop}\label{333}
The genus $\gen(L_3)$ is indistinguishable by squares. In particular, the quaternary $\z$-lattice $L_3$ is strongly $s$-regular.
\end{prop}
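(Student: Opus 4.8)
The plan is to follow the template of Propositions \ref{111} and \ref{222}. Since $h(L_3)=2$, write $\gen(L_3)=\{[L_3],[L_3']\}$; it then suffices to show $r(n^2,L_3)=r(n^2,L_3')$ for every positive integer $n$, for then $\gen(L_3)$ is indistinguishable by squares and Proposition \ref{indis} yields the strong $s$-regularity of $L_3$ (and of $L_3'$). The first step is to pin down the second class $L_3'$ explicitly by computing the genus of $\langle 1,2,3,10\rangle$.

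Unlike $L_1$ and $L_2$, the lattice $L_3$ is genuinely hard at the prime $5$: over $\z_5$ one has $(L_3)_5\simeq\langle 1,2,3\rangle\perp\langle 10\rangle$ with the unimodular part $\langle 1,2,3\rangle$ isotropic, so neither Lemma \ref{aniso} nor the sublattice decomposition used for $L_2$ is available to strip powers of $5$, and modular forms are needed instead. The theta series $\theta_{L_3}$ and $\theta_{L_3'}$ both lie in $M_2\!\left(120,\left(\frac{60}{\cdot}\right)\right)$, with the level $120$ and character $\left(\frac{60}{\cdot}\right)$ dictated by $dL_3=960$ (these are the level and character of Lemma \ref{120}); and since two lattices in the same genus have the same Eisenstein part, namely $r(\,\cdot\,,\gen(L_3))$, the difference $g:=\theta_{L_3}-\theta_{L_3'}$ is a cusp form in $S_2\!\left(120,\left(\frac{60}{\cdot}\right)\right)$. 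The second, and main, step is to express $g$ as an explicit $\z$-linear combination $g=c_1F_{120,1}+c_2F_{120,2}+c_3F_{120,3}$ of the eta-quotients of Lemma \ref{120}. Since two weight-$2$ forms on $\Gamma_0(120)$ agreeing up to the Sturm bound coincide, this is a finite comparison of $q$-expansions: the coefficients of $g$ are differences of representation numbers of $L_3$ and $L_3'$, and those of $F_{120,i}$ are given by the formulas for $A_i$ in Lemma \ref{120}.

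Granting $g=\sum_i c_iF_{120,i}$, the proposition follows. Writing $F_{120,i}=\sum_n A_i(n)q^n$, Lemma \ref{120} gives $A_i(m)=0$ whenever $m\equiv1,4\pmod 5$; hence $g(m)=0$ for all such $m$, and in particular $r(n^2,L_3)=r(n^2,L_3')$ whenever $5\nmid n$, since then $n^2\equiv 1$ or $4\pmod 5$. For $5\mid n$ I would use that each $A_i$ satisfies a multiplicative relation at $5$ of the form $A_i(25m)=5A_i(m)$: for $i=1$ this drops out at once upon substituting $a=5a'$, $b=5b'$ in $a^2+15b^2=200m$ and using $\left(\frac{4}{5}\right)=\left(\frac{-4}{5}\right)=1$, and in general it reflects the fact that $\langle F_{120,1},F_{120,2},F_{120,3}\rangle$ is stable under $U_{25}$. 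Thus, writing $n=5^cn_0$ with $5\nmid n_0$, one gets $A_i(n^2)=5^cA_i(n_0^2)=0$, so $g(n^2)=\sum_i c_iA_i(n^2)=0$ and $r(n^2,L_3)=r(n^2,L_3')$ for every $n$. Therefore $\gen(L_3)$ is indistinguishable by squares, and Proposition \ref{indis} completes the proof.

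The hard part will be the middle step: controlling $S_2\!\left(120,\left(\frac{60}{\cdot}\right)\right)$ precisely enough — its dimension, the oldform contributions coming from the divisors $24,40,60$ of $120$, and the Sturm bound — to be sure that $g$ lies in the span of the three eta-quotients of Lemma \ref{120}, together with checking that their $U_{25}$-behaviour propagates the mod-$5$ vanishing to all square indices. The remaining steps are routine manipulations with Dedekind's $\eta$ and with representation counts.
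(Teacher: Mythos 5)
Your first step reproduces the paper's: $\phi=\frac12(\theta_{L_3}-\theta_{L_3'})$ lies in $S_2\left(120,\left(\frac{60}{\cdot}\right)\right)$, is identified through the Sturm bound ($48$ coefficients) with $F_{120,1}+F_{120,2}-4F_{120,3}$, and Lemma \ref{120} then gives $r(n,L_3)=r(n,L_3')$ for all $n\equiv 1,4\pmod 5$, which settles $r(n^2,L_3)=r(n^2,L_3')$ whenever $5\nmid n$. The gap is in your treatment of $5\mid n$: the identity $A_i(25m)=5A_i(m)$ is \emph{false} for $i=2,3$. Your substitution works for $A_1$ only because $a^2+15b^2$ is anisotropic modulo $5$, so $5\mid a$ forces $5\mid b$; for $A_2$ the residual condition after extracting $5\mid a$ is $2b^2+3c^2+4d^2\equiv 0\pmod 5$, which has nontrivial solutions (e.g.\ $b=c=1$, $d=0$), and similarly for $A_3$, so the solutions of the dilated equation are not all obtained by scaling. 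Concretely, $r(2,L_3)=2$, $r(2,L_3')=0$, while a direct count gives $r(50,L_3)=14$ and $r(50,L_3')=24$, so $\phi(50)=-5=-5\,\phi(2)$; since $A_1(50)=5A_1(2)$ does hold, your claim would force $\phi(50)=+5$. Hence at least one of $A_2,A_3$ violates the relation and the chain $A_i(n^2)=5^cA_i(n_0^2)=0$ collapses. Your fallback --- that the span of the three eta-quotients is $U_{25}$-stable --- would indeed suffice (stability alone propagates the vanishing at indices $\equiv 1,4\pmod 5$ to $25^c\cdot(\text{such indices})$, and the eigenvalue is irrelevant), but it is not proved, it is not implied by your substitution, and verifying it is a second Sturm-bound computation of comparable size to the first.

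The paper sidesteps modular forms entirely at the prime $5$. For each of $L_3$ and $L_3'$ it analyzes the congruences modulo $5$ forced on the coordinates by $Q(x)=25n^2$, decomposes the count over index-$5$ sublattices, and obtains the \emph{same} recursion for both classes,
$$
r(25n^2,L)=2r(5n^2,M)+4r(5n^2,N)-5r(n^2,L),
$$
with fixed auxiliary lattices $M=\langle 1,2,5,6\rangle$ and $N$ independent of the class; subtracting the two recursions and inducting on $\ord_5(n)$, with your (correct) base case $5\nmid n$, yields $r(n^2,L_3)=r(n^2,L_3')$ for all $n$. (Note the sign: this recursion gives $\phi(25n^2)=-5\phi(n^2)$, consistent with the numerics above and incompatible with a $U_{25}$-eigenvalue of $+5$.) You should either adopt this lattice-theoretic decomposition or honestly establish the $U_{25}$-stability of $\langle F_{120,1},F_{120,2},F_{120,3}\rangle$; the scalar identity $A_i(25m)=5A_i(m)$ cannot be rescued.
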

\begin{proof}
Note that $h(L_3)=2$ and $\gen(L_3)=\{[L_3], [L_3']\}$. Here,
$$
L_3'=\langle1\rangle \perp \begin{pmatrix}3&-1&1\\-1&5&1\\1&1&5\end{pmatrix}.
$$

First, we prove that if $n\equiv 1$ or $4 \pmod 5$, then $r(n,L_3)=r(n,L_3')$.
We let
$$
\phi(z)=\frac12\sum_{n=1}^{\infty}(r(n,L_3)-r(n,L_3'))q^{n}.
$$
Then it is known that $\phi(z)\in S_2\left(120,\left(\frac{60}{\cdot}\right)\right)$ is the weight $2$ cusp form. By the Sturm's bound, a modular form of weight $2$ for $\Gamma_0(120)$ is uniquely determined by the first $\frac 16 [\text{SL}_2(\z): \Gamma_0(120)]$ Fourier coefficients. Further by $[\text{SL}_2(\z): \Gamma_0(N)]= N \prod_{p \mid N}(1+\frac 1p)$, we have
$\frac 16 [\text{SL}_2(\z): \Gamma_0(120)]=48.$ One may easily check that the first $48$ Fourier coefficients of $\phi(z)$ are equal to those of 
$$
F_{120,1}(z)+F_{120,2}(z)-4F_{120,3}(z).
$$
Here, $F_{120,1}(z)$, $F_{120,2}(z)$ and $F_{120,3}(z)$ are defined in Lemma \ref{120}.
Therefore, we have $$\phi(z)=F_{120,1}(z)+F_{120,2}(z)-4F_{120,3}(z).$$ This implies that
$$
\frac12(r(n,L_3)-r(n,L_3'))=A_1(n)+A_2(n)-4A_3(n).
$$
By Lemma \ref{120}, if $n\equiv 1$ or $4 \pmod 5$, then we have
\begin{equation}\label{145}
r(n,L_3)=r(n,L_3').
\end{equation}

Next, let $\{x_1,x_2,x_3,x_4\}$ be the basis for $L_3$ whose Gram matrix is given in Table $1$. 
Assume that $Q(ax_1+bx_2+cx_3+dx_4)=(5n)^2=25n^2$. Then $a^2+2b^2+3c^2+10d^2=25n^2$. By a direct computation, we have
$$
a+b\equiv 0, \quad a-b\equiv 0, \quad b+c\equiv 0  \quad \text{or} \quad b-c\equiv 0 \pmod 5.
$$ 
Since the case when $a \equiv b\equiv c\equiv 0\pmod 5$ occurs in all of the above cases, we have
$$
\begin{array}{ll}
r(25n^2,L_3)=&r(25n^2,\z(5x_1)+\z(-x_1+x_2)+\z x_3+ \z x_4)\\
             &+r(25n^2,\z(5x_1)+\z(x_1+x_2)+\z x_3+ \z x_4)\\
             &+r(25n^2,\z x_1)+\z(5x_2)+\z (-x_2+x_3)+ \z x_4)\\
             &+r(25n^2,\z x_1)+\z(5x_2)+\z (x_2+x_3)+ \z x_4)\\
             &-3r(25n^2,\z(5x_1)+\z(5x_2)+\z (5x_3)+ \z x_4),
\end{array}
$$
which implies that 
\begin{equation}\label{e1}
r(25n^2,L_3)=2r(5n^2,M)+2r(25n^2,K_1)-3r(n^2,L_3),
\end{equation}
Here, $M=\langle1,2,5,6\rangle$ and $K_1=\langle3\rangle\perp\begin{pmatrix}3&0&5\\0&10&0\\5&0&25\end{pmatrix}$. Similarly, we have
\begin{equation}\label{e2}
r(25n^2,K_1)=2r(5n^2,N)-r(n^2,L_3),
\end{equation}
where $N=\langle2\rangle\perp\begin{pmatrix}3&0&1\\0&3&1\\1&1&4\end{pmatrix}$. Hence, by \eqref{e1} and \eqref{e2}, we have
\begin{equation}\label{e3}
r(25n^2,L_3)=2r(5n^2,M)+4r(5n^2,N)-5r(n^2,L_3).
\end{equation}

Finally, let $\{y_1,y_2,y_3,y_4\}$ be the basis for $L_3'$ whose Gram matrix is given above. Assume that $Q(ay_1+by_2+cy_3+dy_4)=25n^2$ for any nonnegative integer $n$. Then $a^2+3b^2+5c^2+5d^2-2bc+2bd+2cd=25n^2$. By a direct computation, we have
$$
b+c\equiv 0,\quad b-d\equiv 0 ,\quad b+2c+d\equiv 0 \quad\text{or}\quad b-c-2d\equiv 0 \pmod 5.
$$ 
Since, in all of the above cases, the case when $a\equiv 0 \pmod 5$ and $c+d\equiv 0 \pmod 5$ occurs, we have
$$
\begin{array}{ll}
r(25n^2,L_3')=&r(25n^2,\z y_1+\z(5y_2)+\z (-y_2+y_3)+ \z y_4)\\
             &+r(25n^2,\z y_1+\z(5y_2)+\z y_3+ \z (y_2+y_4))\\
             &+r(25n^2,\z y_1+\z(5y_2)+\z (-2y_2+y_3)+ \z (-y_2+y_4))\\
             &+r(25n^2,\z y_1+\z(5y_2)+\z (y_2+y_3)+ \z (2y_2+ y_4))\\
             &-3r(25n^2,\z(5y_1)+\z y_2+\z (5y_3)+ \z (-y_3+y_4),
\end{array}
$$
which implies that 
$$
r(25n^2,L_3')=2r(5n^2,M)+2r(25n^2,K_2)-3r(n^2,L_3'),
$$
where $K_2=\langle1\rangle\perp\begin{pmatrix}6&6&10\\6&21&35\\10&35&75\end{pmatrix}$. Similarly, we have
$$
r(25n^2,K_2)=2r(5n^2,N)-r(n^2,L_3').
$$
Hence, we have
\begin{equation}\label{e4}
r(25n^2,L_3')=2r(5n^2,M)+4r(5n^2,N)-5r(n^2,L_3').
\end{equation}
Therefore, by \eqref{145}, \eqref{e3} and \eqref{e4}, the genus $\gen(L_3)$ is indistinguishable by squares and $L_3$ and $L_3'$ are strongly $s$-regular.

\end{proof}


\begin{thebibliography}{abcd}

\bibitem {an} F. Andrianov, {\em Clifford algebras and Shimura's lift for theta series}, St Petersburg Math. J. \textbf {12}(2001), 51-81.

\bibitem {co} W. K. Chan and B.-K. Oh, {\em Finiteness theorems for positive definite $n$-regular quadratic forms}, Trans. Amer. Math. Soc. \textbf {355}(2003), 2385-2396.

\bibitem{cs} H. Cohen and F. Str\"{o}mberg, {\em Modular Forms: A Classical Approach}, American Mathematical Society, 2017.

\bibitem {cl} S. Cooper and H.-Y. Lam, {\em On the diophantine equation $n^2=x^2+by^2+cz^2$}, J. Number Theory \textbf {133}(2013), 719-737.

\bibitem {ha} J. Hanke, {\em Local densities and explicit bounds for representability by a quadratic form}, Duke Math. J. \textbf{124(2)}(2004), 351-388.


\bibitem {jlo} J. Ju, I. Lee and B.-K. Oh, {\em A generalization of Watson transformation and representations of ternary quadratic forms}, J. Number Theory \textbf{167}(2016), 202-231.


\bibitem {ko1} K. Kim and B.-K. Oh, {\em The number of representations of squares by integral ternary quadratic forms}, J. Number Theory \textbf{180}(2017), 629-642.

\bibitem {ko2} K. Kim and B.-K. Oh, {\em The number of representations of squares by integral ternary quadratic forms (II)}, J. Number Theory \textbf{173}(2017), 210-229.

\bibitem {ki} Y. Kitaoka, {\em Arithmetic of quadratic forms}, Cambridge University Press, 1993.

\bibitem {kh} G. K\"ohler, {\em Eta Products and Theta Series Identities}, Springer Monographs in Mathematics, Springer, 2011.

\bibitem{li} G. Ligozat, {\em Courbes modulaire de genre $1$}, Bull. Soc. Math. France, M\'{e}m. \textbf{43}, Suppl\'{e}ment au Bull. Soc. Math. France Tome 103, no. 3, Soci\'{e}t\'{e} Math\'{e}matique de France, Paris, 1975, 80 pp. 


\bibitem{ne1} M. Newman, {\em Construction and application of a class of modular functions}, Proc. Lond. Math. Soc. (3) \textbf{7}(1957), 334-350.


\bibitem{ne2} M. Newman, {\em Construction and application of a class of modular functions (II)}, Proc. Lond. Math. Soc. (3) \textbf{9}(1959), 373-387.


\bibitem {om} O. T. O'Meara, {\em Introduction to quadratic forms}, Springer Verlag, New York, 1963.

\bibitem{sp} R. Schulze-Pillot, {\em Thetareihen positiv definiter quadratischer Formen}, Invent. Math. \textbf{75}(1984), 283-299.

\bibitem {wp} X. Wang and D. Pei, {\em Modular Forms with Integral and Half-Integral Weights}, Science Press, Beijing, and Springer, Heidelberg, 2012.

\bibitem {ya} T. Yang, {\em An explicit formula for local densities of quadratic forms}, J. Number Theory \textbf{72}(1998), 309-356.

\end{thebibliography}
\end{document}